\newtheorem{theorem}{Theorem}[section]
\newtheorem{lemma}[theorem]{Lemma}
\newtheorem*{thmA}{Theorem~A}
\newtheorem*{thmB}{Theorem~B}
\newtheorem{lem}[theorem]{Lemma}
\newtheorem{prop}[theorem]{Proposition}
\newtheorem{cor}[theorem]{Corollary}
\theoremstyle{definition}
\theoremstyle{remark}
\DeclareMathOperator{\Aut}{Aut}
\DeclareMathOperator{\Irr}{Irr}
\DeclareMathOperator{\kernel}{Ker}
\newcommand{\bbF}{{\mathbb F}}
\newcommand{\bfZ}{{\mathbf Z}}
\newcommand{\Syl}{{\mathrm {Syl}}}
\DeclareMathOperator{\Sym}{S}
\newcommand{\Fit}{\mathbf{F}}
\newcommand{\OO}{\mathbf{O}}
\newcommand{\Centralizer}{\mathbf{C}}
\newcommand{\Center}{\mathbf{Z}}
\newcommand{\Soc}{\mathrm{Soc}}
\numberwithin{equation}{section}
\newcommand{\Alt}{{\mathrm {A}}}
\newcommand{\TU}{\mathrm{TU}}
\newcommand{\Q}{\mathbb{Q}}
\newcommand{\Z}{\mathbb{Z}}
\begin{document}

\title[Roots of Unity]{Group elements whose character values are roots of unity}

\author[M. Lewis]{Mark L. Lewis}
\address{Department of Mathematical Sciences, Kent State University, Kent, OH 44266, USA}
\email{lewis@math.kent.edu}

\author[L.  Morotti]{Lucia Morotti}
\address{Leibniz Universit\"{a}t Hannover, Institut f\"{u}r Algebra, Zahlentheorie und Diskrete Mathematik, 30167 Hannover, Germany}
\email{morotti@math.uni-hannover.de}

\author[H. P. Tong-Viet]{Hung P. Tong-Viet}

\address{Department of Mathematics and Statistics, Binghamton University, Binghamton, NY 13902-6000, USA}
\email{tongviet@math.binghamton.edu}

\dedicatory{Dedicated to Pham Huu Tiep on his 60th birthday}

\begin{abstract}  We classify all finite groups $G$ which possesses an element $x\in G$ such that every irreducible character of $G$ takes a root of unity value at $x$. 
\end{abstract}

\thanks{}

\subjclass[2010]{Primary 20C15; Secondary 20D10, 20C20}

\date{July 31, 2022}

\keywords{nonvanishing; root of unity}

\maketitle

\section{Introduction}
Let $G$ be a finite group. Following \cite{INW}, an element $x\in G$ is called a nonvanishing element if $\chi(x)\neq 0$ for all irreducible complex characters $\chi$ of $G$. This concept has been widely studied in recent years. In this paper, we consider nonvanishing elements of finite groups which satisfy certain minimal condition as follows. Given a nonvanishing element $x$ of a finite group $G$, it is not hard to show that $|\Centralizer_G(x)|\ge k(G)$ and that the equality holds if and only if $|\chi(x)|=1$ for all irreducible characters $\chi$ of $G$ (see Lemma~\ref{lem:minimal-nonvanishing}), where $\Centralizer_G(x)$ is the centralizer of $x$ in $G$ and $k(G)$ is the number of conjugacy classes of $G$. Note that if $|\chi(x)|=1$ for some character $\chi$ of $G$, then $x$ is a root of unity (see, for example, Problem 3.2 in \cite{Isaacs}). We will call an element $x\in G$ a \emph{root of unity element} if $|\chi(x)|=1$ for all irreducible characters of $G$. The condition $|\Centralizer_G(x)|=k(G)$ alone does not characterizes root of unity elements. For example, if $G=\Alt_5$, the alternating group of degree $5$, and $x\in G$ is an element of order $5$, then $|\Centralizer_G(x)|=5=k(G)$ but $x$ is not a root of unity element.
We note that root of unity elements are called totally unitary or $\TU$-elements by  S. Ostrovskaya and E. M. Zhmud' and they classify all finite metabelian groups with trivial center that contain a root of unity element in \cite[Chapter XXII]{BKZ}.

Write $\Irr(G)$ for the set of irreducible complex characters of $G$ and  $\Fit(G)$ for the Fitting subgroup of $G$, that is, the largest normal nilpotent subgroup of $G$. In our first result, we prove the following.
\begin{thmA}
Let $G$ be a finite group and let $x\in G$. If $|\chi(x)|=1$ for all irreducible characters $\chi$ of $G$, then  $x\in \Fit(G)$ and both $\Fit(G)$ and $G/\Fit(G)$ are abelian. In particular, $G$ is abelian or metabelian,
\end{thmA}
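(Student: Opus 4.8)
The plan is to reduce the three conclusions to a single structural statement and then prove that by induction on $|G|$. The first step, which I would record as a preliminary observation, is that the hypothesis $|\chi(x)|=1$ for all $\chi\in\Irr(G)$ in fact forces each value $\chi(x)$ to be a genuine root of unity. Indeed, $\Gal(\overline{\Q}/\Q)$ permutes $\Irr(G)$ via its action on character values, so every algebraic conjugate of $\chi(x)$ is of the form $\chi^{\sigma}(x)$ for some $\chi^{\sigma}\in\Irr(G)$ and hence also has absolute value $1$; since $\chi(x)$ is an algebraic integer all of whose conjugates lie on the unit circle, Kronecker's theorem gives that $\chi(x)$ is a root of unity. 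Two further easy reductions: if $x\in\Center(G)$ then Schur's lemma gives $|\chi(x)|=\chi(1)$ for every $\chi$, forcing each $\chi(1)=1$ and hence $G$ abelian; and for any $N\trianglelefteq G$ the image $xN$ is again a root of unity element of $G/N$ (inflation preserves values), which is what makes an induction on $|G|$ possible.

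Next I would observe that the conclusion concerning $G/\Fit(G)$ is essentially free once $x\in\Fit(G)$ is known. If $x\in\Fit(G)$ and $\chi\in\Irr(G/\Fit(G))$ is inflated to $G$, then $\Fit(G)\le\Kernel\chi$ gives $\chi(x)=\chi(1)$, so $|\chi(x)|=1$ forces $\chi(1)=1$; as this holds for every irreducible character of $G/\Fit(G)$, that quotient is abelian. Thus the whole theorem reduces to the two assertions $x\in\Fit(G)$ and $\Fit(G)$ abelian, and the final metabelian clause is then formal: $G/\Fit(G)$ abelian gives $G'\le\Fit(G)$, and $\Fit(G)$ abelian gives $G'$ abelian, so $G''=1$.

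For the core I would induct on $|G|$, assuming $G$ nonabelian (so $x\notin\Center(G)$ by the central case above). Choosing a minimal normal subgroup $N$, the image $xN$ is a root of unity element of $G/N$, to which the inductive hypothesis applies, and the task is to transport the conclusion back to $G$. Here Clifford theory is the natural tool: writing $\chi_N=e\sum_i\theta_i$ for $\chi$ lying over $\theta\in\Irr(N)$ and analysing the constraint that $\chi(x)$ be a root of unity should pin down both the location of $x$ relative to $\Fit(G)$ and the abelianity of the relevant layers. In particular a nonabelian Sylow subgroup of $\Fit(G)$ carries an irreducible character of large degree with many zeros, which one expects to obstruct $|\chi(x)|=1$ and thereby force $\Fit(G)$ to be abelian.

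The step I expect to be the main obstacle is establishing that $G$ is solvable at all, that is, ruling out a nonabelian minimal normal subgroup $N=S_1\times\cdots\times S_t$ with $S_i$ nonabelian simple. The mechanism I would try to exploit is the classification-based fact that in a nonabelian simple group every nontrivial element is a zero of some irreducible character; combined with Clifford theory over $N$, the goal is to manufacture from such a zero an irreducible character of $G$ that vanishes (or has absolute value different from $1$) at $x$, contradicting the hypothesis. Controlling the interaction between $x$ and the component $N$ precisely enough to produce this contradiction---and, in the solvable case, running the Isaacs--Navarro--Wolf-type argument that a nonvanishing element must lie in $\Fit(G)$---is where the real work lies.
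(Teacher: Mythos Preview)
Your overall architecture---induct on $|G|$, pass to $G/N$ for a minimal normal $N$, separate the solvable and nonsolvable cases---matches the paper's. The preliminary observations (Kronecker, the central case, that $G/\Fit(G)$ abelian is automatic once $x\in\Fit(G)$) are correct and are essentially the content of the paper's Lemma~\ref{lem:properties} and Lemma~\ref{lem:metabelian}. But the two places you flag as ``where the real work lies'' are precisely where your plan, as written, would not go through without additional ideas.

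\textbf{Solvable case.} You propose to invoke ``the Isaacs--Navarro--Wolf-type argument that a nonvanishing element must lie in $\Fit(G)$.'' That statement is exactly the INW conjecture, still open in general; it is not a theorem you can cite. What the paper uses instead is the Moret\'o--Wolf criterion (Lemma~\ref{lem:abelian Fitting}): if the Sylow $2$-subgroups of $\Fit_{i+1}(G)/\Fit_i(G)$ are abelian for $1\le i\le 9$, then every nonvanishing element lies in $\Fit(G)$. The paper earns the right to apply this by first showing, via the inductive hypothesis on $G/N$, that $G/N$ is metabelian, whence $\Fit_3(G)=G$ and all the Fitting quotients are abelian. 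Your sketch does not supply this intermediate step.

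\textbf{Nonsolvable case.} Two issues. First, you do not explain why $x\in N$; the paper obtains this from the Moret\'o--Tiep result (Lemma~\ref{lem:vanishing}) that under these hypotheses every element of $G\setminus N$ is vanishing. Second, the fact you want to use---that every nontrivial element of a nonabelian simple $S$ is a zero of some $\theta\in\Irr(S)$---does not lift cleanly: if $x=(x_1,\dots,x_k)\in N=S^k$ and $\theta(x_1)=0$, the character $\theta\times 1\times\cdots\times 1$ is typically not $G$-invariant, and an irreducible constituent of its induction to $G$ need not vanish at $x$. The paper circumvents this by using $p$-defect-zero characters of $S$ (Granville--Ono): taking $\lambda=\theta\times\cdots\times\theta$ gives a $p$-defect-zero character of $N$, so $\lambda$ and all its $G$-conjugates vanish on every $p$-singular element, hence on $x$; then any $\chi\in\Irr(G)$ over $\lambda$ satisfies $\chi(x)=0$. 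The finitely many pairs $(S,p)$ with no $p$-defect-zero block (certain sporadics and $\Alt_n$ for $p\in\{2,3\}$) are then handled by explicit character computations, including the combinatorial Lemma~\ref{lem:alternating} for alternating groups. Your Clifford-theory outline does not contain a mechanism equivalent to this.
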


Thus if a finite group $G$ has a root of unity element, then it is abelian or metabelian. In particular, such a group is solvable.
Theorem A confirms a conjecture  proposed in  \cite{INW} for root  of unity elements. This conjecture states that every nonvanishing element of a finite solvable groups $G$ must lie in the Fitting subgroup $\Fit(G)$. 

Write $\Irr(G)$ for the set of all complex irreducible characters of $G$. Our interest in root of unity elements stems from an observation that if $\chi\in\Irr(G)$ and $g\in G$ such that $|\chi(g)|=1$, then the size of the conjugacy class $g^G$ containing $g$ is always divisible by $\chi(1)$ (see Lemma~\ref{lem:divisibility}). Consequently, if $x$ is a root of unity element of $G$, then  $|x^G|$ is divisible by $\chi(1)$ for all $\chi\in\Irr(G)$. This is  related to Conjecture C in \cite{IKMM} asserting that if $\chi\in\Irr(G)$ is a primitive character of a finite group $G$, then $\chi(1)$ divides $|g^G| $ for some $g\in G.$ Thus the observation above gives us a way to locate the required element $g\in G$. However, not every primitive irreducible character admits a root of unity value. For example,  if $G$ is the sporadic simple group $\textrm{O'N}$, then $G$ has a primitive irreducible  character of degree $64790$ which does not admit any root of unity value.

In the next result, we classify all finite groups with a root of unity element. Clearly, if $G$ is abelian, then every element of $G$ is a root of unity element. Let $q>2$ be a prime power. We denote by $\Gamma_q$ the unique doubly transitive Frobenius group with a cyclic complement of order $q-1$ and degree $q.$ So $\Gamma_q\cong \textrm{AGL}_1(\bbF_q)=\bbF_q\rtimes \bbF_q^*,$ where $\bbF_q$ is a finite field with $q$ elements.

\begin{thmB} Let $G$ be a finite group. Then $G$ has a root of unity element $x\in G$ if and only if one of the following holds:
\begin{itemize}
\item $G$ is abelian;
\item $\Fit(G)=G'\Center(G)$ is abelian, $G'\cap \Center(G)=1;$ and $G/\Center (G)\cong \Gamma_{q_1}\times \Gamma_{q_2}\times\cdots\times \Gamma_{q_m}$, where each $q_i>2$ is a prime power and $m\ge 1$ is an integer.
\end{itemize}
\end{thmB}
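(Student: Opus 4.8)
The plan is to treat both implications through one reduction. By Theorem~A we may assume $G$ is nonabelian, hence metabelian, with $F:=\Fit(G)$ and $G/F$ abelian and $x\in F$; set $Z:=\Center(G)$ and $Q:=G/F$. Since $F$ is the abelian Fitting subgroup of a solvable group, $\Centralizer_G(F)=F$, so $Q$ acts faithfully on $F$ and dually on $\Irr(F)$. Each $\chi\in\Irr(G)$ lies over a linear $\lambda\in\Irr(F)$ with $Q$-orbit $\mathcal O$; Clifford theory over the abelian quotient $T/F$, for the inertia group $T=I_G(\lambda)$, shows that all $\chi$ over $\mathcal O$ share a common ramification index $e_{\mathcal O}$, with $\chi(1)=e_{\mathcal O}|\mathcal O|$ and, for $x\in F$, $\chi(x)=e_{\mathcal O}\,s_{\mathcal O}(x)$ where $s_{\mathcal O}(x):=\sum_{\mu\in\mathcal O}\mu(x)$. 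Hence $x$ is a root of unity element if and only if $e_{\mathcal O}\,|s_{\mathcal O}(x)|=1$ for every $Q$-orbit $\mathcal O\subseteq\Irr(F)$; in particular every orbit sum is nonzero.

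For the backward direction the abelian case is immediate. In the structured case, $\Fit(G)=G'\Center(G)$ with $G'\cap\Center(G)=1$ gives the internal direct product $F=G'\times Z$, and $G/Z\cong\Gamma_{q_1}\times\cdots\times\Gamma_{q_m}$ identifies $G'\cong\prod_i\bbF_{q_i}$ with $Q\cong\prod_i\bbF_{q_i}^{*}$ acting coordinatewise on $G'$ and trivially on $Z$. Taking $x\in G'$ with nonzero image in each $\bbF_{q_i}$, every $Q$-orbit in $\Irr(F)=\Irr(G')\times\Irr(Z)$ is a product over the coordinates of $\{1\}$ or $\Irr(\bbF_{q_i})\setminus\{1\}$ times a fixed character of $Z$; additive-character orthogonality on each field gives $\sum_{\alpha\in\Irr(\bbF_{q_i})\setminus\{1\}}\alpha(x_i)=-1$, so $s_{\mathcal O}(x)=\pm1$. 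One checks $e_{\mathcal O}=1$ for all $\mathcal O$ here, and this is exactly where $G'\cap\Center(G)=1$ is used, since it makes the central extension contribute no ramification; therefore $|\chi(x)|=1$ for every $\chi$.

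The forward direction is the heart of the argument. I would first prove $\Centralizer_G(x)=F$, i.e. $\Stab_Q(x)=1$. Indeed the divisibility $\chi(1)\mid|x^G|$, valid since $|\chi(x)|=1$ by the observation quoted in the introduction, reads $[Q:\Stab_Q(\lambda)]\mid[Q:\Stab_Q(x)]$, i.e. $|\Stab_Q(x)|$ divides $|\Stab_Q(\lambda)|$ for every $\lambda$; as the dual action is faithful and all orbit sums are nonzero, this forces $\Stab_Q(x)=1$, so $x$ lies in a regular $Q$-orbit. Given this, the condition $e_{\mathcal O}|s_{\mathcal O}(x)|=1$ rewrites as $e_{\mathcal O}\,\bigl|\sum_{q\in Q}\lambda(qx)\bigr|=|\Stab_Q(\lambda)|$, and the core task is to deduce that the faithful $\bbZ[Q]$-module $F$ splits as $\bigoplus_i\bbF_{q_i}$ with $Q=\prod_i\bbF_{q_i}^{*}$ acting by field multiplication, so that $G/Z\cong\prod_i\Gamma_{q_i}$. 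From this $(G/Z)'=\prod_i\bbF_{q_i}$ yields $\Fit(G)=G'\Center(G)$, while $e_{\mathcal O}=1$ throughout is equivalent to $G'\cap\Center(G)=1$.

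The main obstacle is this last structural extraction: showing that a faithful action of an abelian group $Q$ on a finite abelian group $F$ for which every character-orbit sum at a regular point has modulus one, with trivial ramification, must be a direct product of Frobenius field-multiplication actions. This is genuinely number-theoretic, amounting to classifying which character sums $\sum_{q\in Q}\lambda(qx)$ can have such small modulus, and I expect it to require decomposing the module into homogeneous $p$-components and recognising on each the signature pattern $s_{\mathcal O}(x)\in\{-1,+1\}$ that pins down a single field $\bbF_{q_i}$ together with its multiplicative group as the acting cyclic group. A secondary difficulty is the two-way passage between $G$ and $G/\Center(G)$: one must show the central extension is ramification-free precisely when $G'\cap\Center(G)=1$, and that under this hypothesis a root of unity element of the quotient lifts to one of $G$.
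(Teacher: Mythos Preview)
Your backward direction is sound and close to the paper's, though the paper argues via Gallagher over $Z$ rather than via orbit sums on $\Irr(F)$: since $F=G'\times Z$, every $\lambda\in\Irr(Z)$ extends through $F/G'$ to a linear $\phi\in\Irr(G)$, so every $\chi$ over $\lambda$ has the form $\phi\mu$ with $\mu\in\Irr(G/Z)$, whence $|\chi(x)|=|\phi(x)|\,|\mu(\bar x)|=1$. This sidesteps your claim that $e_{\mathcal O}=1$ for every orbit $\mathcal O\subseteq\Irr(F)$, which you assert but do not justify.

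The forward direction contains a genuine gap that you yourself flag. Your plan is to classify directly the faithful actions of an abelian $Q$ on an abelian $F$ for which every character-orbit sum at a regular point has modulus one; you call this ``the main obstacle'' and say only that you ``expect it to require'' a certain decomposition. That is not a proof. Moreover, your deduction of $\Stab_Q(x)=1$ from the divisibility $|\Stab_Q(x)|\mid|\Stab_Q(\lambda)|$ for all $\lambda$ is incomplete as stated: faithfulness of the dual action gives only $\bigcap_\lambda\Stab_Q(\lambda)=1$, which does not by itself force a common divisor of all the $|\Stab_Q(\lambda)|$ to be $1$, and you have not explained how the nonvanishing of the orbit sums supplies the missing step.

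The paper's route is entirely different and avoids any direct analysis of orbit sums. The centerless case is taken as known (Ostrovskaya--Zhmud'): a centerless metabelian group with a root of unity element is a direct product $\prod_i\Gamma_{q_i}$. Applying this to $G/\Center_\infty(G)$ and running an induction, the paper proves that $G$ is an $A$-group (all Sylow subgroups abelian). Taunt's structure theory of $A$-groups then yields $\Center_\infty(G)=\Center(G)$ and $G'\cap\Center(G)=1$ for free, so $G/\Center(G)$ is itself centerless and the Ostrovskaya--Zhmud' classification applies to it directly. The identity $\Fit(G)=G'\Center(G)$ then follows from $\Fit(G)/\Center(G)=\Fit(G/\Center(G))=(G/\Center(G))'$. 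Thus the forward direction reduces to two cited results (the centerless classification and Taunt) together with a short inductive proof of the $A$-group property; the number-theoretic orbit-sum problem you pose never arises.
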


We also show that if a finite group $G$ has a root of unity element, then it is an $A$-group, that is, $G$ is solvable and all its Sylow subgroups are abelian.
For each $i$ with $1\leq i\leq m,$ write $\Gamma_{q_i}=V_i\ltimes A_i$, where $V_i$ is the Frobenius kernel and $A_i$ is the Frobenius complement. Let  $U:=\prod_{i=1}^m (V_i-\{1\})$ and let $\mathcal{U}=\pi^{-1}(U)$ where $\pi:G\rightarrow G/\Center(G)$ is the natural homomorphism. Then every element of $U$ is a root of unity element of $G/\Center(G)$ by \cite[Lemma 3.17]{BKZ} and from the proof of Theorem B, every element of $\mathcal{U}$ is a root of unity element of $G$.

Our notation is standard and we follow \cite{Isaacs} for the character theory of finite groups.

\section{Preliminaries}

 We collect some properties of root of unity elements in the next lemmas.

\begin{lemma}\label{lem:divisibility}
Let $G$ be a finite group and let $g\in G$. If $\chi\in\Irr(G)$ and $|\chi(g)|=1$, then $\chi(1)$ divides $|g^G|.$ In particular, if $x\in G$ is a root of unity element, then $\chi(1)$ divides $|x^G|$ for all $\chi\in\Irr(G)$.
\end{lemma}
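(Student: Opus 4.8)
The plan is to combine the classical integrality of the central character with the elementary fact that the complex conjugate of a character value is again an algebraic integer. Write $n = |g^G|$, $d = \chi(1)$, and $\alpha = \chi(g)$, let $C = g^G$ be the conjugacy class of $g$, and let $\widehat{C} = \sum_{h\in C} h$ be the associated class sum in the group algebra. First I would recall that the central character $\omega_\chi$ takes the value
\[
\omega_\chi(\widehat{C}) = \frac{|C|\,\chi(g)}{\chi(1)} = \frac{n\alpha}{d},
\]
and that this is an algebraic integer (see \cite{Isaacs}): class sums are integral over $\mathbb{Z}$ in the center of the group algebra, so their scalar action on an irreducible module is an algebraic integer. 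Thus $n\alpha/d$ is an algebraic integer.

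Next I would bring in the hypothesis $|\chi(g)| = 1$. Since every character value is a sum of roots of unity, both $\alpha = \chi(g)$ and $\overline{\alpha} = \chi(g^{-1})$ are algebraic integers, and $|\alpha| = 1$ forces $\alpha\overline{\alpha} = 1$, that is, $1/\alpha = \overline{\alpha} = \chi(g^{-1})$. Multiplying the algebraic integer $n\alpha/d$ by the algebraic integer $\chi(g^{-1})$ then gives
\[
\frac{n}{d} = \frac{n\alpha}{d}\cdot\frac{1}{\alpha} = \frac{n\alpha}{d}\cdot\chi(g^{-1}),
\]
which is a product of algebraic integers and hence itself an algebraic integer. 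But $n/d$ is a rational number, and a rational number that is an algebraic integer must be a rational integer; therefore $\chi(1) = d$ divides $|g^G| = n$, as required.

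I do not expect a genuine obstacle here: the argument is essentially a two-line integrality manipulation, and the only mildly delicate point is the replacement of $1/\alpha$ by $\chi(g^{-1})$, which is precisely where the full strength of $|\chi(g)| = 1$ (rather than merely $\chi(g)\neq 0$) is exploited. The concluding ``in particular'' claim is then immediate: if $x$ is a root of unity element, then $|\chi(x)| = 1$ for every $\chi\in\Irr(G)$, so applying the first assertion to each such $\chi$ yields $\chi(1)\mid |x^G|$ for all $\chi\in\Irr(G)$.
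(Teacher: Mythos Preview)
Your proof is correct and follows essentially the same approach as the paper: both use the integrality of $\omega_\chi(\widehat{C}) = n\alpha/d$ and then multiply by the algebraic integer $\overline{\alpha}$ to exploit $|\alpha|=1$. The only cosmetic difference is that the paper multiplies by the full central character value $\omega_{\overline{\chi}}(\widehat{C}) = n\overline{\alpha}/d$ rather than just $\overline{\alpha} = \chi(g^{-1})$, obtaining that $n^2/d^2$ is a rational algebraic integer and then deducing $d\mid n$; your version reaches $n/d$ directly and is marginally cleaner.
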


\begin{proof}  Assume that $\chi\in\Irr(G)$ and $g\in G$ such that $|\chi(g)|=1.$ Let $K$ be the class sum of the conjugacy class $g^G,$ that is, $K=\sum_{y\in g^G}y.$ Then $$\omega_\chi(K)=\frac{|g^G|\chi(g)}{\chi(1)}$$ is an algebraic integer by \cite[Theorem 3.7]{Isaacs}. Similarly, \[\omega_{\overline{\chi}}(K)=\frac{|g^G|\overline{\chi}(g)}{\overline{\chi}(1)}=\frac{|g^G|\overline{\chi(g)}}{\chi(1)}\] is also an algebraic integer, where $\overline{\chi}$ is the complex conjugate of $\chi$. Since the products of algebraic integers are algebraic integers, we  see that \[\omega_\chi(K)\omega_{\overline{\chi}}(K)=\frac{|g^G|^2|\chi(g)|^2}{\chi(1)^2}=\frac{|g^G|^2}{\chi(1)^2}\] is an algebraic integer.  Clearly, ${|g^G|^2}/{\chi(1)^2}$ is a rational number,  so $|g^G|^2/\chi(1)^2$ is an integer which implies that $\chi(1)$ divides $|g^G|$ as wanted.

If  $x\in G$ is a root of unity element, then for any $\chi\in\Irr(G)$, we have $|\chi(x)|=1$ and hence $\chi(1)$ divides $|x^G|$ as wanted.
\end{proof}

\begin{lem}\label{lem:properties} Let $G$ be a finite group and let $x\in G$ be a root of unity element. Then
\begin{enumerate}[$(a)$]

\item $k(G)=|\Centralizer_G(x)|.$
\item  $G'\leq \langle x^G\rangle.$
\item If $N\unlhd G$, then $xN$ is a root of unity in $G/N.$
\item If $z\in \Center(G)$, then $xz$ is also a root of unity element.
\end{enumerate}

\end{lem}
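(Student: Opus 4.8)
The plan is to treat each of the four parts as a short consequence of standard character theory, using only that $x$ being a root of unity element means $|\chi(x)|=1$ for every $\chi\in\Irr(G)$. For part $(a)$ I would invoke the second (column) orthogonality relation, which gives $\sum_{\chi\in\Irr(G)}|\chi(x)|^2=|\Centralizer_G(x)|$. Since every summand equals $1$ by hypothesis, the left-hand side is just the number of irreducible characters, namely $k(G)$, and the identity $k(G)=|\Centralizer_G(x)|$ falls out at once; this also recovers the equality case referred to in the introduction.

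For parts $(c)$ and $(d)$ I would argue directly. For $(c)$, recall that the irreducible characters of $G/N$ are exactly the $\overline{\chi}$ arising from those $\chi\in\Irr(G)$ with $N\leq\Kernel\chi$, via $\overline{\chi}(gN)=\chi(g)$. Hence $|\overline{\chi}(xN)|=|\chi(x)|=1$ for all such $\overline{\chi}$, so $xN$ is a root of unity element of $G/N$. For $(d)$, fix $\chi\in\Irr(G)$ afforded by a representation $\rho$. Since $z\in\Center(G)$, Schur's lemma forces $\rho(z)=\omega I$ for some root of unity $\omega$, whence $\rho(xz)=\omega\rho(x)$; taking traces gives $\chi(xz)=\omega\chi(x)$, so $|\chi(xz)|=|\chi(x)|=1$ and $xz$ is again a root of unity element.

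The one part that takes a genuine observation is $(b)$, and that is where I would concentrate. Set $N=\langle x^G\rangle$, the normal closure of $x$; I want to show $G'\leq N$, equivalently that $G/N$ is abelian. The key point is that any $\chi\in\Irr(G)$ with $N\leq\Kernel\chi$ satisfies $\chi(x)=\chi(1)$, because $x\in N\leq\Kernel\chi$, while the root of unity hypothesis gives $|\chi(x)|=1$; together these force $\chi(1)=1$. In other words, every irreducible character inflated from $G/N$ is linear, so $G/N$ is abelian and $G'\leq N$. The only thing to get right is the identification of the irreducible characters of $G/N$ with those $\chi\in\Irr(G)$ trivial on $N$, after which the conclusion is immediate; I do not anticipate a serious obstacle in any of the four parts.
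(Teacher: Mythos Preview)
Your proposal is correct and follows essentially the same approach as the paper: second orthogonality for $(a)$, the observation that every irreducible character trivial on $\langle x^G\rangle$ is forced to be linear for $(b)$, inflation of characters from $G/N$ for $(c)$, and the central character/Schur's lemma argument for $(d)$. The only cosmetic difference is that for $(d)$ the paper phrases it via $\chi_{\Center(G)}=\chi(1)\lambda$ rather than invoking Schur's lemma explicitly, but these are the same thing.
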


\begin{proof}
 From the Second Orthogonal relation, we have \[|\Centralizer_G(x)|=\sum_{\chi\in\Irr(G)}|\chi(x)|^2=\sum_{\chi\in\Irr(G)}1=k(G).\]

Let $L=\langle x^G\rangle.$ For any $\chi\in\Irr(G/L)$, we see that  $x\in L\subseteq \kernel(\chi).$ Hence $1=|\chi(x)|=\chi(1)$ and thus all characters $\chi\in\Irr(G/L)$ are linear which implies that $G/L$ is abelian and so $G'\leq L.$  Since $\Irr(G/N)\subseteq\Irr(G)$ whenever $N\unlhd G$, if $x$ is a root of unity of $G$ then $xN$ is a root of unity of $G/N.$ 

Finally, let $z\in \Center(G)$ and $\chi\in\Irr(G)$. Then $\chi_{\Center(G)}=\chi(1)\lambda$ for some $\lambda\in\Irr(\Center(G)).$ We have $\chi(xz)=\lambda(z)\chi(x)$ and thus if $x$ is a root of unity element, then so is $xz$ as $|\lambda(z)|=1.$
\end{proof}

The next lemma follows from the proof of Lemma 3.17 in \cite[Chapter XXII]{BKZ} and the previous lemma. For completeness, we include the proof here.
\begin{lem}\label{lem:minimal-nonvanishing} Let $G$ be a finite group and let $x\in G$ be a nonvanishing element of $G$. Then $|\Centralizer_G(x)|\ge k(G)$; and the equality holds if and only if $x$ is a root of unity element.
\end{lem}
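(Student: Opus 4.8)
The plan is to begin from the Second Orthogonality relation, exactly as in the proof of Lemma~\ref{lem:properties}(a), which gives
\[
|\Centralizer_G(x)| = \sum_{\chi\in\Irr(G)} |\chi(x)|^2 .
\]
Since $x$ is nonvanishing, every summand $|\chi(x)|^2$ is strictly positive, and the whole problem reduces to showing that this sum is at least $k(G)=|\Irr(G)|$, with equality precisely when each summand equals $1$.

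First I would organize $\Irr(G)$ into orbits under the natural action of the Galois group $\Gamma=\Gal(\bbQ(\zeta_{|G|})/\bbQ)$, where $\sigma\in\Gamma$ sends $\chi$ to the character $g\mapsto \sigma(\chi(g))$. For a single orbit $O$, I would consider the product $P_O=\prod_{\chi\in O}\chi(x)$. Because every $\sigma\in\Gamma$ merely permutes the factors of $P_O$, this product is fixed by $\Gamma$ and hence lies in $\bbQ$; being a product of algebraic integers, it is in fact a rational integer. The nonvanishing hypothesis forces $P_O\neq 0$, so $|P_O|\ge 1$, and therefore $\prod_{\chi\in O}|\chi(x)|^2 = |P_O|^2 \ge 1$.

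The inequality then follows by applying the arithmetic--geometric mean inequality inside each orbit,
\[
\sum_{\chi\in O} |\chi(x)|^2 \ge |O|\left(\prod_{\chi\in O}|\chi(x)|^2\right)^{1/|O|} \ge |O|,
\]
and summing over all orbits yields $|\Centralizer_G(x)|\ge\sum_O |O| = k(G)$. The easy direction of the equivalence is then immediate: if $x$ is a root of unity element, every $|\chi(x)|^2=1$ and the orthogonality sum collapses to $k(G)$.

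The step demanding the most care is the equality case. Equality in the final estimate requires equality simultaneously in every AM--GM step and in every bound $|P_O|\ge 1$. The latter forces $\prod_{\chi\in O}|\chi(x)|^2=1$ for each orbit, while the former forces the quantities $|\chi(x)|^2$ to be constant across each orbit; together these pin every value to modulus one, giving $|\chi(x)|=1$ for all $\chi\in O$, and hence for all $\chi\in\Irr(G)$, which is exactly the assertion that $x$ is a root of unity element. I expect the main subtlety to lie in handling the two equality conditions jointly, in particular in confirming that constancy within an orbit combined with unit product indeed pins each individual value to modulus one.
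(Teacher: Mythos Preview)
Your proof is correct and follows essentially the same approach as the paper: use Galois invariance to see that the relevant product of the $\chi(x)$ is a nonzero rational integer, hence of absolute value at least $1$, and then combine this with the AM--GM inequality and the second orthogonality relation. The only cosmetic difference is that the paper takes the single product $\alpha=\prod_{\chi\in\Irr(G)}\chi(x)$ and applies AM--GM globally, whereas you work Galois-orbit by Galois-orbit; the arguments are otherwise identical.
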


\begin{proof}
We first claim that if $x\in G$ is a nonvanishing element, then $|\Centralizer_G(x)|\ge k(G)$. Let $\alpha=\prod_{\chi\in\Irr(G)}\chi(x)$. Let $n$ be the exponent of $G$ and let $\Q_n=\Q(\xi)$, where $\xi$ is a primitive $nth$-root of unity. Let $\mathcal{G}$ be the Galois group of $\Q_n$ over $\Q.$ Then $\mathcal{G}$ acts on $\Irr(G)$ and we see that $\chi^\sigma\in\Irr(G)$ if and only if $\chi\in\Irr(G)$ for all $\sigma\in\mathcal{G}$. Hence $\alpha^\sigma=\alpha$ for all $\alpha\in\mathcal{G}$. It follows that $\alpha\in\Q$. Since $\alpha$ is an algebraic integer, we must have that $\alpha\in\Z.$ As $x$ is nonvanishing, $\alpha\neq 0$ and so $|\alpha|\ge 1.$

By the inequality between arithmetic and geometric means, we have that \[1\leq |\alpha|^2=\prod_{\chi\in\Irr(G)}|\chi(x)|^2\leq \left(\dfrac{\sum_{\chi\in\Irr(G)}|\chi(x)|^2}{k(G)}\right)^{k(G)}=\left(\dfrac{|\Centralizer_G(x)|}{k(G)}\right)^{k(G)}.\]
 It follows that $|\Centralizer_G(x)|\ge k(G)$ as wanted.

Next, assume that $x$ is a nonvanishing element of $G$ and $|\Centralizer_G(x)|=k(G)$.  Then $|\alpha|=1$ from the inequality above. Hence \[\prod_{\chi\in\Irr(G)}|\chi(x)|^2= \left(\dfrac{\sum_{\chi\in\Irr(G)}|\chi(x)|^2}{k(G)}\right)^{k(G)}=1.\] Therefore, $|\chi(x)|=1$ for all $\chi\in\Irr(G).$ So $x\in G$ is a root of unity element. 

Conversely, if $x$ is a root of unity, then clearly $x$ is nonvanishing and $|\Centralizer_G(x)|=k(G)$ by Lemma \ref{lem:properties}(a).  
\end{proof}

A consequence of the previous lemma is that if $x\in G$ and  $k(G)>|\Centralizer_G(x)|$ or equivalently $|x^G|>|G|/k(G)$,  the average of the conjugacy class size of $G$, then $x$ is a vanishing element of $G$, that is, $\chi(x)=0$ for some $\chi\in\Irr(G)$. Also, if $G$ has a root of unity element $x$, then the commuting probability $$\textrm{cp}(G)=\dfrac{|\{(a,b)\in G\times G: ab=ba\}|}{|G|^2}=\frac{k(G)}{|G|}$$ is equal to $1/|x^G|.$

In the next two lemmas, we quote some results in Chapter XXII of \cite{BKZ}.
\begin{lem}\label{lem:metabelian}
Let $G$ be a finite group and suppose that $x\in G$ is a root of unity element. 
\begin{enumerate}[$(a)$]
\item If $x\in \Fit(G)$, then $\Fit(G)$ is abelian and $G'\leq \Fit(G)$. In particular, $G$ is abelian or metabelian.
\item Conversely, if $G$ is metabelian, then $x\in \Fit(G)$.
\end{enumerate}
\end{lem}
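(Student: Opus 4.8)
The plan is to treat the two implications separately, and I expect part (a) to be substantially harder than part (b).

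For part (a), write $F=\Fit(G)$. The inclusion $G'\le F$ is immediate: by Lemma~\ref{lem:properties}(b) we have $G'\le\ang{x^G}$, and since $x\in F\unlhd G$ every conjugate $x^g$ lies in $F$, so $\ang{x^G}\le F$ and hence $G'\le F$. In particular $G/F$ is abelian. The real content is that $F$ is abelian, and here is how I would attack it. Since $F$ is nilpotent it is the direct product of its Sylow subgroups, each characteristic in $F$ and hence normal in $G$; so it suffices to show every Sylow subgroup of $F$ is abelian, and I may concentrate on a single (possibly nonabelian) $p$-part $P=F_p\unlhd G$. Writing $x=x_px_{p'}$ for the $p$- and $p'$-parts of $x$, the characters of $F$ of the form $\theta\times 1$ with $\theta\in\Irr(P)$ take the value $\theta(x_p)$ at $x$, so the problem is pushed onto the conjugation action of $G$ on $\Irr(P)$ and the values $\theta(x_p)$.

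For this I would use Clifford theory relative to $F\unlhd G$ together with the fact, just established, that $G/F$ is abelian. For $\theta\in\Irr(F)$ let $O$ be its $G$-orbit, let $T=I_G(\theta)$ be the inertia group (so $F\le T\unlhd G$, using $G/F$ abelian), and set $S_O=\sum_{\theta'\in O}\theta'(x)$. Every $\chi\in\Irr(G)$ lying over $\theta$ satisfies $\chi(x)=e_\chi S_O$ with $e_\chi=\langle\chi_F,\theta\rangle$, so the root of unity hypothesis $|\chi(x)|=1$ forces $e_\chi$ to be constant on each orbit, say $e_\chi=e_O$, and $|S_O|=1/e_O$. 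Thus proving $F$ abelian amounts to showing that, when $P$ is nonabelian, some $G$-orbit $O$ on $\Irr(P)$ makes the condition $|S_O|=1/e_O$ unsatisfiable. I would locate such an orbit using a minimal normal subgroup $N\le P'$ of $G$: a short nilpotency argument (if $[N,P]=N$ then $[N,{}_kP]=N$ for all $k$, contradicting nilpotence of $P$) gives $N\le\Center(P)$, so for a nonlinear $\theta\in\Irr(P)$ lying over a nontrivial $\lambda\in\Irr(N)$ one has $\theta_N=\theta(1)\lambda$ with $\theta(1)\ge p$. The main obstacle is exactly the final arithmetic step: one must show that the resulting orbit sum $S_O$, a sum of $|O|$ values each built from $\theta(1)\ge p$ eigenvalues of $x_p$, cannot have modulus equal to the reciprocal of an integer. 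This is where the genuine content lies, and where I would follow the $p$-group analysis of \cite[Chapter XXII]{BKZ}; note that the crude vanishing trick used below is unavailable here precisely because $x\in F\le T$, so no irreducible character is forced to vanish at $x$.

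For part (b) the situation is cleaner and I can give a complete argument. Assume $G$ is metabelian and set $A=G'$, an abelian normal subgroup; as $A$ is normal and nilpotent, $A\le\Fit(G)$. It suffices to prove that $x$ centralizes $A$: then $\ang{x}A$ is abelian (since $x$ commutes with the abelian group $A$), hence nilpotent, and it is normal in $G$ because it contains $A=G'$ and $G/A$ is abelian; therefore $\ang{x}A\le\Fit(G)$ and $x\in\Fit(G)$. Suppose, for contradiction, that $x$ does not centralize $A$. Then $x$ induces a nontrivial automorphism of $A$, hence of $\Irr(A)$ by duality, so there is $\lambda\in\Irr(A)$ with $\lambda^x\ne\lambda$; that is, $x\notin T:=I_G(\lambda)$. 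Since $A\le T\le G$ and $G/A$ is abelian, $T\unlhd G$. Let $\chi\in\Irr(G)$ lie over $\lambda$; by the Clifford correspondence $\chi=\psi^G$ for some $\psi\in\Irr(T)$ over $\lambda$. Because $T$ is normal, the induced character $\psi^G$ vanishes off $T$, and as $x\notin T$ we conclude $\chi(x)=0$, contradicting $|\chi(x)|=1$. Hence $x$ centralizes $A$ and $x\in\Fit(G)$, as required. In summary, the only step that is not routine is the abelianness of $\Fit(G)$ in part (a); the hardest point is the arithmetic incompatibility of a nonabelian $p$-section with the rigid modulus condition $|S_O|=1/e_O$ imposed by the root of unity hypothesis.
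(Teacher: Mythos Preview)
The paper does not actually prove this lemma; its entire proof reads ``This is Lemma 1.5 in \cite[Chapter XXII]{BKZ}.'' So your proposal is already more detailed than the paper's own treatment.

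Your argument for part (b) is complete and correct: the reduction to showing that $x$ centralizes $A=G'$, and the vanishing argument via induction from the normal inertia group $T=I_G(\lambda)$ when $x\notin T$, are both sound. Likewise, your derivation of $G'\le\Fit(G)$ in part (a) from Lemma~\ref{lem:properties}(b) is correct.

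For the remaining claim in (a)---that $\Fit(G)$ is abelian---you set up a sensible Clifford-theoretic framework (the orbit sums $S_O$ and the forced equality $|S_O|=1/e_O$), but you explicitly stop short of the decisive arithmetic step and defer to \cite[Chapter XXII]{BKZ}. That gap is real and you do not close it independently; but neither does the paper, which cites exactly the same source. So on part (a) your proposal and the paper's proof are on equal footing, both resting on \cite{BKZ} for the hard step, while on part (b) and the easy half of (a) you have supplied self-contained arguments that the paper omits.
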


\begin{proof} 
This is Lemma 1.5 in \cite[Chapter XXII]{BKZ}.
\end{proof}

The following  is the main result of Chapter XXII in \cite{BKZ}. Recall that the socle of a finite group $G$, denoted by $\textrm{Soc}(G)$, is a product of all minimal normal subgroups of $G$.

\begin{lem}\label{lem:metabelian with trivial center}
Let $G$ be a finite metabelian group with trivial center. Then $G$ has a root of unity element if and only if $G\cong \Gamma_{q_1}\times \Gamma_{q_2}\times \cdots\times \Gamma_{q_m}$, where $q_1,q_2,\cdots, q_m$ are prime power $>2.$ Moreover, if $x\in G$ is a root of unity, then $\Fit(G)={\Soc}(G)=G'=\langle x^G\rangle$ and $\Centralizer_G(x)=\Fit(G)$.
\end{lem}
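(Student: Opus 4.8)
The plan is to prove the two implications separately: the reverse one by an explicit character computation, and the forward one by a Clifford-theoretic structural analysis that I expect to be the substantive part.

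\emph{The direction \((\Leftarrow)\).} Writing a single factor as $\Gamma_q=V\rtimes A$ with $V\cong\bbF_q^+$ the Frobenius kernel and $A\cong\bbF_q^\times$, the character table is explicit: there are $q-1$ linear characters inflated from $A$, together with one nonlinear character $\theta$ of degree $q-1$ obtained by inducing a nontrivial $\mu_0\in\Irr(V)$. Since $A$ permutes $\Irr(V)\setminus\{1\}$ transitively, $\theta_V=\sum_{1\neq\mu\in\Irr(V)}\mu$, so $\theta(v)=\sum_{1\neq\mu}\mu(v)=-1$ for $1\neq v\in V$, while every linear character has value $1$ there. Hence every nontrivial $v\in V$ is a root of unity element of $\Gamma_q$. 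Because $\Irr$ of a direct product consists of outer tensor products, with $|(\chi_1\otimes\cdots\otimes\chi_m)(v_1,\dots,v_m)|=\prod_i|\chi_i(v_i)|$, any tuple $x=(v_1,\dots,v_m)$ with each $1\neq v_i\in V_i$ is a root of unity element of $\prod_i\Gamma_{q_i}$; and since each $\Gamma_{q_i}$ has trivial center and is metabelian, so is the product. This is essentially \cite[Lemma 3.17]{BKZ}.

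\emph{Setup and Clifford reduction for \((\Rightarrow)\).} Since $G$ is metabelian, Lemma~\ref{lem:metabelian} gives $x\in F:=\Fit(G)$, with $F$ abelian and $G'\le F$; as $G$ is solvable with $\Center(G)=1$ we have $\Centralizer_G(F)=F$, so $H:=G/F$ is abelian and acts faithfully on $F$, with $\Centralizer_F(H)=F\cap\Center(G)=1$. For $\lambda\in\Irr(F)$ let $O_\lambda$ be its $H$-orbit and $T_\lambda$ its inertia group; by Clifford theory each $\chi\in\Irr(G)$ lies over a unique orbit and equals $\psi^G$ for some $\psi\in\Irr(T_\lambda\mid\lambda)$ with $\psi_F=\psi(1)\lambda$. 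Since $x\in F\unlhd G$, the induced-character formula collapses to $\chi(x)=\psi(1)\sigma_\lambda$, where $\sigma_\lambda:=\sum_{\mu\in O_\lambda}\mu(x)$. The hypothesis $|\chi(x)|=1$ for all $\chi$ then forces, for every nontrivial $\lambda$, that $\sigma_\lambda\neq 0$ and $\psi(1)|\sigma_\lambda|=1$ for all $\psi$ over $\lambda$; in particular the degree $d_\lambda:=\psi(1)$ is independent of $\psi$ and satisfies $d_\lambda|\sigma_\lambda|=1$.

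\emph{First consequences and the target structure.} Put $M=\langle x^G\rangle\unlhd G$, so $M\le F$ and $G'\le M$ by Lemma~\ref{lem:properties}(b). If $\lambda\in\Irr(F/M)$, then $\mu(x)=1$ for every $\mu\in O_\lambda$ (conjugates of $x$ lie in $M\le\ker\lambda$), giving $\sigma_\lambda=|O_\lambda|$ and hence $d_\lambda|O_\lambda|=1$, so $\lambda$ is $H$-invariant; but by Brauer's permutation lemma the number of $H$-fixed characters of $F$ equals $|\Centralizer_F(H)|=1$, so $\Irr(F/M)=\{1\}$ and $M=F$. The same duality gives $|F/[F,H]|=|\Centralizer_F(H)|=1$, whence $F=[F,G]\le G'$ and so $G'=\langle x^G\rangle=F$. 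It then remains to convert the arithmetic $d_\lambda|\sigma_\lambda|=1$ into module data: the plan is to show $d_\lambda=1$ for all nontrivial $\lambda$, that $F$ is a semisimple $H$-module $F=\bigoplus_{i=1}^m W_i$ on whose $i$-th homogeneous component $H$ acts through scalars with $\End_H(W_i)=\bbF_{q_i}$, $\dim_{\bbF_{q_i}}W_i=1$, and the image of $H$ in $\bbF_{q_i}^\times$ sharply transitive on $W_i\setminus\{0\}$. Setting $H_i$ to be the part of $H$ acting nontrivially only on $W_i$, one checks $H=\prod_iH_i$, $V_i:=W_i$, $V_i\rtimes H_i\cong\Gamma_{q_i}$, and that the $\Gamma_{q_i}$ centralize one another and generate $G$, giving $G\cong\prod_i\Gamma_{q_i}$. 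The ``Moreover'' assertions follow from the single-factor computation: $\Soc(G)=\prod_iV_i=F=\Fit(G)$ and $\Centralizer_G(x)=\prod_i\Centralizer_{\Gamma_{q_i}}(v_i)=\prod_iV_i=F$.

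\emph{The main obstacle.} I expect the real difficulty to lie entirely in this last step, namely squeezing the sharp-transitivity and, crucially, the \emph{direct}-product decomposition out of the inequalities $|\sigma_\lambda|\le 1$. The delicate points will be: (a) ruling out $d_\lambda>1$ and fractional orbit sums, which should require a rationality/Galois argument on sums of roots of unity rather than the bare counting identities (these turn out to be automatically consistent and give no information); (b) proving that $F$ is completely reducible and of squarefree exponent, i.e.\ excluding a cyclic prime-power kernel such as in the dihedral group of order $18$, where one checks directly that no power of a generator is a root of unity element; and (c) obtaining a genuine direct product rather than merely a Frobenius-type extension. The last point is subtle precisely because $\gcd(|F|,|H|)$ need not be $1$---already $\Gamma_4\times\Gamma_3\cong\Alt_4\times\Sym_3$ has $\gcd(12,6)=6$---so Schur--Zassenhaus does not apply and the factors $\Gamma_{q_i}$ must be constructed and shown to commute and split off by hand.
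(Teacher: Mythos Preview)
The paper does not prove this lemma at all: it is quoted directly from \cite[Chapter~XXII]{BKZ} (Theorem~1.12, Corollary~1.11, Lemmas~3.8 and~3.14), so there is no in-paper argument to compare against. You are attempting to reprove from scratch a result that the paper simply imports as a black box.

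Your $(\Leftarrow)$ direction is correct and complete. Your Clifford setup for $(\Rightarrow)$ is also sound, and the ``first consequences'' correctly yield $G'=\langle x^G\rangle=\Fit(G)$. One caution: the step ``by Brauer's permutation lemma the number of $H$-fixed characters of $F$ equals $|\Centralizer_F(H)|$'' is not quite right as stated. Brauer's lemma gives $|\Irr(F)^h|=|F^h|$ for each individual $h\in H$, but equality of permutation characters does \emph{not} in general force $|\Irr(F)^H|=|F^H|$; indeed $|\hat F^H|=|F/[F,H]|$ can differ from $|F^H|$ even for abelian $H$ (e.g.\ $H=(\bbZ/2)^2$ acting suitably on $(\bbZ/2)^3$). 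What you actually need, and what is true here, is the implication $F^H=1\Rightarrow [F,H]=F$ for $H$ abelian: writing $H=H_p\times H_{p'}$ on the $p$-part $F_p$, the $p$-group $H_p$ always has a nonzero fixed point on any nonzero $p$-group, so $F_p^H=1$ forces $F_p^{H_{p'}}=1$, and then coprime action gives $F_p=[F_p,H_{p'}]\le[F,H]$. So the conclusion stands, but the justification should be adjusted.

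The genuine gap is the one you yourself flag: you have not carried out the structural classification. Your obstacles (a)--(c) --- forcing $d_\lambda=1$ and controlling the orbit sums, showing $F$ is elementary abelian with $H$ sharply transitive on each isotypic piece, and assembling a \emph{direct} product decomposition without Schur--Zassenhaus --- are exactly the substance of the Ostrovskaya--Zhmud' argument, which occupies several lemmas in \cite[Chapter~XXII]{BKZ} and is not routine. As written, your $(\Rightarrow)$ is an outline, not a proof. For this paper's purposes that is immaterial, since the lemma is only cited; but if you want a self-contained argument you must either fill in (a)--(c) or cite \cite{BKZ} as the paper does.
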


\begin{proof}
The equivalent statements follow from Theorem 1.12 and Corollary 1.11 and the last claim follows from Lemmas 3.8 and 3.14 in \cite[Chapter XXII]{BKZ}.
\end{proof}

For a finite group $G$, recall that 
$\Fit(G)$, the Fitting subgroup of $G$, is the largest nilpotent normal subgroup of $G$. The Fitting series of a finite group $G$ is defined by $\Fit_1(G):=\Fit(G)$ and for any integer $i\ge 1,$  $\Fit_{i+1}(G)/\Fit_i(G)=\Fit(G/\Fit_i(G))$. Similarly, the upper central series of $G$ is defined by $\bfZ_1(G):=\Center(G)$ and for $i\ge 1,$ we have $\bfZ_{i+1}(G)/\bfZ_i(G)=\Center(G/\bfZ_i(G)).$  The last term of the upper central series of $G$ is called the hypercenter (or hypercentral) of $G$ and is denoted by $\Center_\infty(G)$. 

The following results are well-known.
\begin{lem}\label{lem:Fitting} Let $G$ be a finite group and let $N$ be a normal subgroup of $G$ such that $N\leq \Center(G)$.

\begin{enumerate}[$(1)$]
\item If $G/N$ is nilpotent, then $G$ is nilpotent.

\item $\Fit(G/N)=\Fit(G)/N.$

\item $\Fit(G/\bfZ_i(G))=\Fit(G)/\bfZ_i(G)$ for all $i\ge 1$.
\end{enumerate}

\end{lem}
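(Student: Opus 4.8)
The plan is to establish the three parts in sequence, using part (1) to prove part (2) and then part (2) to prove part (3) by induction; the only genuine subtlety lies in part (3), where the hypothesis $N\leq\Center(G)$ of part (2) is not directly available.

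For part (1), I would argue via the lower central series. Writing $\gamma_i(G)$ for its terms, suppose $G/N$ is nilpotent of class $c$, so that $\gamma_{c+1}(G/N)=1$. Since $\gamma_{c+1}(G)N/N=\gamma_{c+1}(G/N)$, this forces $\gamma_{c+1}(G)\leq N\leq\Center(G)$, and therefore
\[
\gamma_{c+2}(G)=[\gamma_{c+1}(G),G]\leq[\Center(G),G]=1.
\]
Hence $G$ is nilpotent (of class at most $c+1$).

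For part (2), I would prove the two inclusions separately. Since $\Center(G)$ is an abelian normal subgroup it lies in $\Fit(G)$, so $N\leq\Center(G)\leq\Fit(G)$ and the quotient $\Fit(G)/N$ is a normal nilpotent subgroup of $G/N$; thus $\Fit(G)/N\leq\Fit(G/N)$. For the reverse inclusion, let $M$ be the full preimage in $G$ of $\Fit(G/N)$, so that $M\unlhd G$ and $M/N$ is nilpotent. Because $N\leq\Center(G)$ we have $N\leq\Center(M)$, so part (1) applied to $M$ shows that $M$ is nilpotent; being normal in $G$, it satisfies $M\leq\Fit(G)$, whence $\Fit(G/N)=M/N\leq\Fit(G)/N$. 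Combining the inclusions gives equality.

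For part (3), I would induct on $i$, noting first that $\bfZ_i(G)\leq\Center_\infty(G)\leq\Fit(G)$ so that every quotient $\Fit(G)/\bfZ_i(G)$ is meaningful. The base case $i=1$ is exactly part (2) with $N=\Center(G)$. For the inductive step, set $\overline{G}=G/\bfZ_i(G)$; then $\bfZ_{i+1}(G)/\bfZ_i(G)=\Center(\overline{G})$, and applying part (2) to $\overline{G}$ with this central subgroup gives $\Fit(\overline{G}/\Center(\overline{G}))=\Fit(\overline{G})/\Center(\overline{G})$. Using the inductive hypothesis $\Fit(\overline{G})=\Fit(G)/\bfZ_i(G)$ together with the natural isomorphism $\overline{G}/\Center(\overline{G})\cong G/\bfZ_{i+1}(G)$ and the third isomorphism theorem, this rearranges to $\Fit(G/\bfZ_{i+1}(G))=\Fit(G)/\bfZ_{i+1}(G)$, completing the induction. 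The main obstacle, such as it is, is precisely this point: one cannot apply part (2) to $G$ with $N=\bfZ_i(G)$ directly, since $\bfZ_i(G)\not\leq\Center(G)$ in general, and the fix is to apply part (2) one layer at a time along the upper central series.
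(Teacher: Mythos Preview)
Your proof is correct and follows essentially the same approach as the paper, which simply notes that parts (1) and (2) are well-known and that part (3) follows from part (2) by induction. Your write-up supplies the details the paper omits, including the key observation that part (2) must be applied one layer at a time along the upper central series rather than directly with $N=\bfZ_i(G)$.
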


\begin{proof}
The first two claims are well-known. The last claim follows from the second claim and induction.
\end{proof}

The next result is Corollary 2.3 in \cite{MW}.
\begin{lem}\label{lem:abelian Fitting}
Let $G$ be a finite solvable group and assume that the Sylow $2$-subgroups of $\Fit_{i+1}(G)/\Fit_i(G)$ are abelian for $1\leq i\leq 9$. Then every nonvanishing element of $G$ lies in $\Fit(G).$
\end{lem}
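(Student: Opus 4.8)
The plan is to prove the contrapositive through the theory of regular orbits in coprime linear actions, extending the architecture that Isaacs--Navarro--Wolf used to settle the odd-order case. Suppose $x$ is a nonvanishing element of the solvable group $G$ and suppose, for contradiction, that $x \notin \Fit(G)$. Two facts would anchor the argument. First, nonvanishing passes to quotients: since $\Irr(G/N) \subseteq \Irr(G)$ for every $N \unlhd G$, the image $xN$ is nonvanishing in $G/N$, so I may freely reduce modulo normal subgroups. Second, for solvable $G$ the Fitting subgroup is self-centralizing, $\Centralizer_G(\Fit(G)) \leq \Fit(G)$; consequently $G/\Center(\Fit(G))$ acts faithfully by conjugation on $\Fit(G)$, and any $x \notin \Fit(G)$ must act nontrivially on at least one chief factor $V = K/L$ of $G$ lying inside $\Fit(G)$.

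The mechanism I would use to derive a contradiction links vanishing to orbit sizes. Given such a chief factor $V$ (an elementary abelian $p$-group), view it as a module for the $p'$-part of the acting quotient and consider the dual action on $\Irr(V)$. If this action admits a regular orbit --- a linear character $\lambda \in \Irr(V)$ with trivial stabilizer --- then a suitable extension of $\lambda$ induces to an irreducible character $\chi$ of $G$, and an induced character vanishes off the conjugates of the inducing subgroup. Every nonvanishing element would then have to be conjugate into the (small) stabilizer, and in particular would be forced to centralize $V$. Applying this uniformly to all chief factors inside $\Fit(G)$ would show that the nonvanishing element $x$ centralizes $\Fit(G)$, hence lies in $\Centralizer_G(\Fit(G)) \leq \Fit(G)$, contradicting the assumption.

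Everything therefore reduces to an existence theorem for regular orbits of a faithful coprime linear action, and this is exactly where the hypothesis on Sylow $2$-subgroups is needed. For groups of odd order Espuelas' theorem furnishes the regular orbit outright, which is why the odd-order case goes through unconditionally; in even order the statement genuinely fails, and the obstructing configurations --- quaternion groups and $\SL_2(3)$-type actions in small characteristic --- all have non-abelian Sylow $2$-subgroups. The assumption that the Sylow $2$-subgroups of the Fitting factors $\Fit_{i+1}(G)/\Fit_i(G)$ are abelian is tailored to rule out precisely these exceptions, so that at each layer where the orbit argument is invoked a regular (or sufficiently small) orbit is available. I would run the induction on $|G|$ so as to apply this layer by layer, peeling off the portions of $\Fit(G)$ already shown to be centralized.

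The hard part, and the origin of the explicit range $1 \leq i \leq 9$, is the bookkeeping across the Fitting series together with the residual small cases that survive even under the abelian Sylow $2$-hypothesis. One must track how the vanishing argument localizes to the lower Fitting factors, handle the modules on which a single regular orbit still fails --- passing instead to several copies or to $\Irr(V)$ in the style of Gluck--Wolf --- and control the non-coprime overlaps where $p$ divides both $|\Fit(G)|$ and the order of the acting quotient. I expect the constant $9$ to appear as a bound on the depth to which these exceptional small-module situations can propagate up the Fitting series before the regular-orbit machinery applies without restriction; obtaining this sharp value, rather than a cruder one, is the most delicate and computational part of the proof.
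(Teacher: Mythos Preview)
The paper does not prove this lemma at all: it is merely quoted, with the one-line justification ``This is Corollary~2.3 in \cite{MW}'' (Moret\'{o}--Wolf). So there is no argument in the paper to compare your proposal against.

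That said, what you have written is a reasonable high-level outline of the strategy that actually underlies the Moret\'{o}--Wolf result: reduce via quotients, use self-centralizing of $\Fit(G)$ in solvable groups, and then appeal to regular-orbit theorems for the action on chief factors (or their duals) to manufacture an induced irreducible character that vanishes on $x$. Your identification of the abelian Sylow $2$-hypothesis as the device that excludes the standard obstructions to regular orbits is also correct in spirit. Where your sketch is weakest is precisely where you acknowledge it to be: the link between the Fitting-length bound $9$ and the propagation of the exceptional small-module cases is the genuine technical content of \cite{MW}, and your proposal does not supply it. For the purposes of this paper, however, none of that matters --- the authors treat the lemma as a black-box citation, and so should you.
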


\section{Solvability of finite groups with a root of unity element}

We first prove Theorem A for finite solvable groups.

\begin{prop}\label{prop:metabelian}
Let $G$ be a finite solvable group and suppose that $x\in G$ is a root of unity element. Then $G$ is  abelian or $G$ is metabelian, $x\in\Fit(G)$ and both $\Fit(G)$ and $G/\Fit(G)$ are abelian. 
\end{prop}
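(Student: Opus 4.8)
The plan is to reduce the entire statement to the single assertion that $x\in\Fit(G)$. Once this is known, Lemma~\ref{lem:metabelian}(a) gives at once that $\Fit(G)$ is abelian and that $G'\le\Fit(G)$; the latter forces $(G/\Fit(G))'=G'\Fit(G)/\Fit(G)=1$, so $G/\Fit(G)$ is abelian, while $G'$, being a subgroup of the abelian group $\Fit(G)$, is itself abelian, so $G$ is metabelian (or abelian). Thus every clause of the conclusion follows, and it remains only to place $x$ in the Fitting subgroup.

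To do this I would argue by strong induction on $|G|$, the abelian case being trivial. The engine is Lemma~\ref{lem:abelian Fitting}: since $x$ is a root of unity element it is in particular nonvanishing, so it suffices to verify the hypothesis of that lemma, namely that the Sylow $2$-subgroups of the Fitting factors $\Fit_{i+1}(G)/\Fit_i(G)$ are abelian for $1\le i\le 9$. Note that this hypothesis concerns only the factors with $i\ge 1$, and not the bottom term $\Fit(G)$ itself, which is precisely what an inductive reduction up the Fitting series can be made to supply.

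The verification of this hypothesis is the heart of the argument. Fix $i\ge 1$. Because $G$ is a nontrivial solvable group we have $\Fit(G)\neq 1$, hence $\Fit_i(G)\neq 1$ and $G/\Fit_i(G)$ is a proper quotient, so it is strictly smaller than $G$. By Lemma~\ref{lem:properties}(c) the image $x\Fit_i(G)$ is a root of unity element of the solvable group $G/\Fit_i(G)$, so the induction hypothesis applies and tells us that $G/\Fit_i(G)$ is abelian, or metabelian with abelian Fitting subgroup. In either case $\Fit(G/\Fit_i(G))$ is abelian, and by the very definition of the Fitting series this subgroup equals $\Fit_{i+1}(G)/\Fit_i(G)$. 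Hence every factor $\Fit_{i+1}(G)/\Fit_i(G)$ with $i\ge 1$ is abelian, so in particular its Sylow $2$-subgroup is abelian, and the hypothesis of Lemma~\ref{lem:abelian Fitting} holds. That lemma then yields $x\in\Fit(G)$, completing the induction.

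The main point to watch is not any hard computation but the bookkeeping that keeps the induction non-circular: the induction hypothesis is invoked only on the strictly smaller groups $G/\Fit_i(G)$, whereas Lemma~\ref{lem:abelian Fitting} is applied to $G$ itself once its hypothesis has been secured. One should also confirm the degenerate case where $G$ is nilpotent, so that $\Fit(G)=G$ and all higher factors are trivial: there the hypothesis holds vacuously, the lemma gives $x\in\Fit(G)=G$, and Lemma~\ref{lem:metabelian}(a) then forces $G$ to be abelian, consistent with the statement.
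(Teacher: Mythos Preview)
Your argument is correct and follows essentially the same strategy as the paper: reduce to showing $x\in\Fit(G)$, verify by induction that the Fitting factors $\Fit_{i+1}(G)/\Fit_i(G)$ are abelian, and then invoke Lemma~\ref{lem:abelian Fitting}. The only cosmetic difference is that the paper first passes to $G/N$ for a minimal normal subgroup $N$ (and phrases things as a contradiction argument), whereas you apply the induction hypothesis directly to the quotients $G/\Fit_i(G)$; your version is slightly more streamlined but the underlying idea is the same.
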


\begin{proof} 
If $G$ is abelian, then we are done. So assume that $G$ is nonabelian. If $G$ is metabelian, then the conclusion follows from Lemma \ref{lem:metabelian}. Thus we only need to show that $G$ is metabelian. We will prove this by induction on $|G|$.

Let $N$ be a minimal normal subgroup of $G$. Let  $\overline{G}=G/N$ and use the `bar' notation. By Lemma \ref{lem:properties}\:(c), $\bar{x}$ is a root of unity element in $\overline{G}$ and thus by induction, $\overline{G}$ is  abelian or metabelian. If $\overline{G}$ is abelian, then $G$ is metabelian. So assume that  $\overline{G}$ is metabelian (but $G$ is neither metabelian nor abelian). Then  $\bar{x}\in\Fit(\overline{G})$ and both $\Fit(\overline{G})$ and $\overline{G}/\Fit(\overline{G})$ are abelian by Lemma \ref{lem:metabelian}. Further $N$ is abelian since it is a minimal normal subgroup and $G$ (and so also $N$) is solvable. Thus $N\leq\Fit(G)$.

Therefore, $G/\Fit({G})$ is also metabelian. Again by Lemma \ref{lem:metabelian} we have that $\Fit(G/\Fit(G))=\Fit_2(G)/\Fit_1(G)$ and $G/\Fit_2(G)$ are abelian. 
It follows that  $\Fit_3(G)=G$ and $\Fit_{i+1}(G)/\Fit_i(G)$ is abelian for all $i\ge 1$. Now by Lemma \ref{lem:abelian Fitting}, we have $x\in\Fit(G)$ as $x$ is nonvanishing and so $G$ is metabelian. This contradiction completes the proof.
\end{proof}

The following result follows from  the proof of Theorem A in \cite{MT1}.  Recall that an element $g\in G$ is called a vanishing element if $\chi(g)=0$ for some $\chi\in\Irr(G).$

\begin{lem}\label{lem:vanishing} Let $G$ be a finite group. Assume that $G$ has a unique minimal normal subgroup $N$. If $N$ is non-abelian and $G/N$ is solvable, then every element in $G-N$ is a vanishing element.
\end{lem}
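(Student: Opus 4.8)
The plan is to use Clifford theory together with the induced-character formula to exhibit, for each $g\in G-N$, an irreducible character of $G$ vanishing at $g$. First I would record the structure forced by the hypotheses: since $N$ is the unique minimal normal subgroup and is non-abelian, $N=S_1\times\cdots\times S_k$ with the $S_i\cong S$ a fixed non-abelian simple group, $\Centralizer_G(N)=1$ (as $\Centralizer_G(N)$ is normal and meets $N$ trivially, so by uniqueness it must be $1$), and $G$ permutes the factors $\{S_1,\dots,S_k\}$ transitively; thus $G$ embeds into $\Aut(N)=\Aut(S)\wr\Sym_k$, with $N$ identified with the inner automorphisms. Writing $Q=G/N$, which is solvable by hypothesis, the quotient $Q$ acts on $\Irr(N)$ (the inner part $N$ acting trivially).

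The key reduction is the following criterion. Fix $\theta\in\Irr(N)$, let $T=I_G(\theta)$ be its inertia group, and pick any $\psi\in\Irr(T\mid\theta)$; by the Clifford correspondence $\chi:=\psi^G\in\Irr(G)$. The induced-character formula gives $\chi(g)=0$ whenever no $G$-conjugate of $g$ lies in $T$. Since $T\supseteq N$, the condition $g^G\cap T=\emptyset$ is equivalent to saying that $\bar g\in Q$ is not $Q$-conjugate into $T/N=I_G(\theta)/N$, which in turn says that $\bar g$ fixes no character in the $Q$-orbit of $\theta$. So it suffices to prove: for every $g\in G-N$ there is a $\theta\in\Irr(N)$ whose $Q$-stabilizer contains no conjugate of $\bar g$.

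Next I would construct such a $\theta$ by analysing the action of $\bar g$. Write $\theta=\theta_1\otimes\cdots\otimes\theta_k$ with $\theta_i\in\Irr(S_i)$, and let $\sigma$ be the permutation that $g$ induces on the factors. If $\sigma\ne 1$, choosing a nontrivial $\vartheta\in\Irr(S)$ on one moved factor and the trivial character elsewhere already places $\theta$ in a $Q$-orbit on which $\bar g$ has no fixed point, since any element with permutation part conjugate to $\sigma$ must move the ``support'' of $\theta$; this makes the required conjugacy-avoidance essentially formal. If $\sigma=1$ then $g$ normalises each $S_i$ and, because $g\notin N$, induces an outer automorphism $\alpha_{i_0}$ on some factor $S_{i_0}$; here I would invoke the fact that a nontrivial outer automorphism of a finite simple group moves at least one irreducible character, so I can choose $\theta_{i_0}\in\Irr(S_{i_0})$ with $\theta_{i_0}^{\alpha_{i_0}}\ne\theta_{i_0}$ and build $\theta$ accordingly. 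In both cases one then checks that $\bar g$ is not $Q$-conjugate into $I_G(\theta)/N$, producing the desired vanishing character.

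The main obstacle is precisely the last verification in the case $\sigma=1$: it is not enough that $\bar g$ itself move $\theta$; one must ensure that the stabilizer of $\theta$ in $Q$ avoids the \emph{entire} $Q$-class of $\bar g$, and this forces a careful interaction between the wreath-permutation action and the diagonal action by outer automorphisms on $\Irr(S)$. Controlling the latter is the genuinely hard, almost-simple heart of the argument: it rests on detailed information (ultimately CFSG-based) about how $\Out(S)$ permutes $\Irr(S)$ for each non-abelian simple $S$, which is exactly the content extracted from the proof of Theorem~A of \cite{MT1}. By contrast, the case $\sigma\ne 1$ is expected to be routine once the support bookkeeping is set up.
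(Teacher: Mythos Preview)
The paper gives no proof of its own; it simply records that the lemma follows from the proof of Theorem~A in \cite{MT1}. Your outline is in the same spirit and ultimately appeals to the same source: reduce via Clifford theory to finding, for each $1\ne\bar g\in Q=G/N$, a character $\theta\in\Irr(N)$ whose $Q$-stabiliser misses the whole $Q$-class of $\bar g$, and then feed in the CFSG-based information from \cite{MT1} about how $\Out(S)$ acts on $\Irr(S)$.

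That said, your case analysis contains a genuine gap. In the case $\sigma\ne 1$ you assert that placing a single nontrivial $\vartheta$ on a factor moved by $\sigma$ suffices, because ``any element with permutation part conjugate to $\sigma$ must move the support of $\theta$''. This is only true when $\sigma$ is \emph{fixed-point-free} on $\{1,\dots,k\}$. If $\sigma$ fixes some index, then since $G$ permutes the $S_i$ transitively, so does the image of $Q$ in $\Sym_k$, and hence some $Q$-conjugate of $\sigma$ fixes your chosen support point $i$; the corresponding $Q$-conjugate of $\bar g$ then acts on $S_i$ by an outer automorphism which may very well fix $\vartheta$. Equivalently, the $Q$-orbit of your $\theta$ contains characters supported on fixed points of $\sigma$, and you have not excluded that $\bar g$ stabilises one of these. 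So the dichotomy is not ``$\sigma=1$ hard, $\sigma\ne1$ routine''; the genuinely easy case is ``$\sigma$ fixed-point-free'', and as soon as $\bar g$ normalises at least one simple factor you are forced back onto exactly the outer-automorphism analysis you identified as the crux. This is why one really does need the full argument from \cite{MT1}, not just the single fact that a nontrivial outer automorphism of $S$ moves some irreducible character.
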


Let $n\ge 2$ be an integer and let $\lambda=(\lambda_1,\lambda_2,\dots,\lambda_r)$ be a partition of $n.$ For $1\leq i\leq k$ and $1\leq j\leq\lambda_i$, we denote by $h_{i,j}^\lambda$ the hook length of the node $(i,j)$ of the Young diagram of $\lambda$. Let $\lambda$ and $\mu$ be partitions of $n.$ We use the notation $\chi_\mu^\lambda$ to denote the value of the irreducible character of $\Sym_n$ labeled by $\lambda$ evaluated at the conjugacy class with cycle type $\mu$. 

\begin{lem}\label{lem:alternating}
Let $n\ge 6$ be an integer and let $x\in\Alt_n$. Then there exists a partition $\lambda$ of $n$ which is not self-conjugate such that $\chi^\lambda(x)\neq \pm 1.$
\end{lem}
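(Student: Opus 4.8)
The plan is to argue by contradiction: assuming that $\chi^\lambda(x)=\pm 1$ for \emph{every} non-self-conjugate $\lambda\vdash n$, I would derive an impossibility. The key simplification is that all character values of $\Sym_n$ are integers, so it suffices to exhibit a single non-self-conjugate $\lambda$ whose value deviates from $\pm 1$, and I expect this can be done using only three very explicit low-degree characters, namely those labelled by $(n-1,1)$, $(n-2,2)$, and $(n-2,1,1)$. First I would record that for $n\ge 6$ all three of these partitions are non-self-conjugate: their conjugates are $(2,1^{n-2})$, $(2,2,1^{n-4})$ and $(3,1^{n-3})$ respectively, each forcing $n\le 5$ if equality is to hold. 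Thus any one of them witnessing a value $\neq\pm1$ proves the lemma.

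The engine of the proof is a set of clean combinatorial formulas for these three values in terms of the cycle structure of $x$. Write $f$ for the number of fixed points of $x$ and $a_2$ for the number of $2$-cycles in $x$. Then $\chi^{(n-1,1)}(x)=f-1$, since $(n-1,1)$ affords the standard representation. Next, Young's rule gives that the permutation character of $\Sym_n$ on $2$-element subsets equals $\chi^{(n)}+\chi^{(n-1,1)}+\chi^{(n-2,2)}$, and its value at $x$ is the number of fixed $2$-subsets, namely $\binom{f}{2}+a_2$; subtracting the first two contributions yields $\chi^{(n-2,2)}(x)=\binom{f}{2}+a_2-f$. Finally, using $S^{(n-k,1^k)}\cong\wedge^k(\text{standard})$, the generating function $\sum_{k}\chi^{(n-k,1^k)}(x)\,t^k=\det(I+t\rho(x))$ factors over the cycles of $x$ as $(1+t)^{f-1}\prod_{c\ge 2}(1-(-t)^c)^{a_c}$; extracting the coefficient of $t^2$ gives $\chi^{(n-2,1,1)}(x)=\binom{f-1}{2}-a_2$.

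With these formulas the case analysis is short. If $f\notin\{0,2\}$, then $\chi^{(n-1,1)}(x)=f-1\neq\pm1$ and we are done. It therefore remains to treat $f=0$ and $f=2$, and here the two characters $\chi^{(n-2,2)}$ and $\chi^{(n-2,1,1)}$ cannot both equal $\pm1$. Indeed, when $f=0$ they become $a_2$ and $1-a_2$; if the first is $\pm1$ then $a_2=1$ (as $a_2\ge 0$), forcing the second to be $0$. When $f=2$ they become $a_2-1$ and $-a_2$; the first being $\pm1$ forces $a_2\in\{0,2\}$ while the second being $\pm1$ forces $a_2=1$, which is incompatible. In every case one of the three displayed non-self-conjugate characters takes a value different from $\pm1$, completing the argument.

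I do not anticipate a genuinely hard obstacle here; the only points requiring care are the justification of the three value formulas (standard, but each needs the correct representation-theoretic identity: the standard representation, Young's rule on $2$-subsets, and exterior powers of the standard representation, the last of which may equivalently be extracted from the Murnaghan--Nakayama rule on hook shapes) and the non-self-conjugacy check, which is where the hypothesis $n\ge 6$ is used. It is worth noting why a purely global argument via column orthogonality, $\sum_{\lambda\vdash n}\chi^\lambda(x)^2=|\Centralizer_{\Sym_n}(x)|$, does not suffice on its own: elements with large centralizers (for instance a $3$-cycle, with centralizer of order $3\,(n-3)!$) dwarf the number of partitions $p(n)$, so no contradiction arises from counting alone. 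This is precisely what makes the explicit small-degree characters necessary, and the three above turn out to be exactly enough.
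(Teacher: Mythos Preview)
Your proof is correct and takes a genuinely different route from the paper's. The paper first invokes an external reduction (Lemma~1.6 of \cite{Mo}) to assume the cycle type of $x$ has distinct parts except possibly a repeated $1$, and then runs a Murnaghan--Nakayama case analysis on the shape of the cycle type to exhibit, in each case, a non-self-conjugate $\lambda$ with $\chi^\lambda(x)=0$. Your argument is more elementary and entirely self-contained: you fix three specific non-self-conjugate partitions $(n-1,1)$, $(n-2,2)$, $(n-2,1,1)$ once and for all, write their values explicitly in terms of the number of fixed points $f$ and the number of transpositions $a_2$, and finish with a short numerical case split on $f$. What your approach buys is brevity and the removal of the dependence on \cite{Mo}; what the paper's approach buys is that it always produces an actual zero of a character (rather than merely a value $\neq\pm1$), which connects to the broader theme of vanishing elements used elsewhere in the argument. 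Both are fully adequate for the application in the proof of Theorem~A.
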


\begin{proof}
We will use the following fact  which can follow easily from Murnaghan-Nakayama formula. If $m\ge 1$ is an integer and $\gamma=(\gamma_1,\gamma_2,\dots,\gamma_r),$ $\beta=(\beta_1,\beta_2,\dots,\beta_s)$ are partition of $m$ with $h_{2,1}^{\gamma}<\beta_1$ and $\gamma_1-\gamma_2\ge \beta_1$, then $$\chi^\gamma_\beta=\chi^{(\gamma_1-\beta_1,\gamma_2,\dots,\gamma_r)}_{(\beta_2,\beta_3,\dots,\beta_2)}.$$

Let $\alpha\vdash n$ be the cycle partition of $x\in\Alt_n.$ Since $n\ge 6,$  from the proof of Lemma 1.6 in \cite{Mo} we may assume that all parts of $\alpha$ are distinct, except possibly for the part $1$, which may have multiplicity $2$.

Write $\alpha=(\alpha_1,\alpha_2,\dots,\alpha_l)\vdash n.$ We consider the following cases.

\textbf{Case $1:$} $l\ge 2$ and $\alpha_{l-1}=\alpha_l.$ In this case, we have $\alpha_{l-1}=\alpha_l=1$, $l\ge 3 $ (as $n\ge 6$) and $\alpha_{l-2}>1$.

Assume first that $\alpha_{l-2}>2$. Since $n\ge 6,$ the partition $(n-2,1,1)$ of $n$ is not self-conjugate and we have that \[\chi_{\alpha}^{(n-2,1,1)}=\chi^{(\alpha_{l-2},1,1)}_{(\alpha_{l-2},1,1)}=0.\] The  first equality holds by the observation above and the latter equality holds since $\alpha_{l-2}>2$ so 
$$\begin{array}{ccl} h_{1,1}^{(\alpha_{l-2},1,1)}&=&\alpha_{l-2}+2>\alpha_{l-2}\\
h_{1,2}^{(\alpha_{l-2},1,1)}&=&\alpha_{l-2}-1<\alpha_{l-2}\\
h_{2,1}^{(\alpha_{l-2},1,1)}&=&2<\alpha_{l-2}.
\end{array}$$

 Assume next that $\alpha_{l-2}=2.$ Then $l\ge 4$ and $\alpha_{l-3}\ge 3$. Then the partition $(n-2,2)$ of $n$ is not self-conjugate and by the observation above, we have \[\chi_{\alpha}^{(n-2,2)}=\chi^{(2,2)}_{(2,1,1)}=0.\]

 \textbf{Case $2:$} $l\ge 2$ and $\alpha_{l-1}\neq \alpha_l.$
 
 Assume first that $l\ge 3$ or $\alpha_{l-1}\neq \alpha_l+1.$ Then
 
  \[\chi^{(n-\alpha_l,1^{\alpha_l})}_{\alpha}=\chi^{(\alpha_{l-1},1^{\alpha_l})}_{(\alpha_{l-1},\alpha_l)}=0\] 
  as 
 $$\begin{array}{ccl} h_{1,1}^{(\alpha_{l-1},1^{\alpha_l})}&=&\alpha_{l-1}+\alpha_l>\alpha_{l-1}\\
h_{1,2}^{(\alpha_{l-1},1^{\alpha_l})}&=&\alpha_{l-1}-1<\alpha_{l-1}\\
h_{2,1}^{(\alpha_{l-1},1^{\alpha_l})}&=&\alpha_l<\alpha_{l-1}
\end{array}$$
 
 and  $(n-\alpha_l,1^{\alpha_l})'=(\alpha_l+1,1^{n-\alpha_l-1})\neq (n-\alpha_l,1^{\alpha_l})$ as
 $$n-\alpha_l\ge \alpha_{l-2}+\alpha_{l-1}\ge 2\alpha_l+3>\alpha_l+1$$ if $l\ge 3$; and $n-\alpha_l\ge \alpha_{l-1}>\alpha_l+1$ if $\alpha_{l-1}\neq\alpha_l.$
 
 Assume next that $\ell=2$ and $\alpha_1=\alpha_2+1.$ Then $n=2\alpha_2+1.$ As $n\ge 6,$ we have $\alpha_2\ge 3.$ Again $(n-2,2)'\neq (n-2,2)$.
 
If $\alpha_2=3,$ then $n=7$ and $\chi^{(5,2)}_{(4,3)}=0.$ Assume that $\alpha_2\ge 4.$ Then  \[\chi_{\alpha}^{(n-2,2)}=\chi^{(n-2,2)}_{(\alpha_{2}+1,\alpha_2)}=\chi^{(\alpha_2-2,2)}_{(\alpha_2)}=0\] as $h_{1,1}^{(\alpha_2-2,2)}=\alpha_2-1<\alpha_2.$

 \textbf{Case $3:$} $l=1$. Then $\alpha=(n).$ Since $n\ge 6$, $(n-2,2)$ is not self-conjugate and $\chi^{(n-2,2)}_{(n)}=0$ as $h_{1,1}^{(n-2,2)}=n-1<n.$
\end{proof}

We are now ready to prove Theorem A.
\begin{proof}[\textbf{Proof of Theorem A}] Let $x\in G$ be a root of unity element. If $G$ is solvable, then the theorem follows from Proposition \ref{prop:metabelian}. Thus it suffices to show that $G$ is solvable. Suppose not and let $G$ be a counterexample to the theorem with $|G|$ minimal. Then  $x\in G$ is a root of unity but  $G$ is non-solvable. Let  $L=\langle x^G\rangle.$ Then $G'\leq L\unlhd G$ by Lemma \ref{lem:properties}\:(b).

Let $N$ be a minimal normal subgroup of $G$. By Lemma \ref{lem:properties}\:(c), $G/N$ has a root of unity $xN$. Since $|G/N|<|G|$, $G/N$ is solvable.  As $G$ is non-solvable,  $N$ is non-solvable.  If $G$ has two distinct minimal normal subgroups, say $N_1\neq N_2$, then $N_1\cap N_2=1$ and thus $G$ embeds into $G/N_1\times G/N_2$, where the latter group is solvable by the argument above. Therefore, $G$ is solvable, which is a contradiction. It follows that $G$ has a unique minimal normal subgroup $N$, which is non-solvable and $G/N$ is solvable. Hence $N=S_1\times S_2\cdots\times S_k$, where each $S_i\cong S$ for some non-abelian simple group $S$.  By Lemma \ref{lem:vanishing}, $x\in N$ as every element in $G-N$ is a vanishing element. It follows from Lemma \ref{lem:properties}\:(b) that $G'=\langle x^G\rangle=N,$ and so $G/N$ is abelian.

Write $x=(x_1,x_2,\dots,x_k)\in N$, where $x_i\in S_i$ for $1\leq i\leq k.$ As $G$ is nonabelian, $x$ is nontrivial and so $o(x)$, the order of $x$, is divisible by some prime $p\ge 2.$ Clearly, $o(x_i)$ is divisible by $p$ for some $i\ge 1.$ Assume that $S$ has an irreducible character $\theta$ of $p$-defect zero. Then $\lambda=\theta\times\theta\times\cdots\times \theta\in\Irr(N)$ has $p$-defect zero. Clearly, every $G$-conjugate of $\lambda$ also has $p$-defect zero and hence if $\chi\in\Irr(G)$ lying over $\lambda$, then $\chi_N$ is a sum of $G$-conjugates of $\lambda$ so that $\chi(x)=0$ since every conjugate of $\lambda$ vanishes at $x$ as $o(x)$ is divisible by $p$. Therefore, we can assume that $S$ has no $p$-defect zero character. By \cite[Corollary~2]{GO}, the following cases hold.

\begin{enumerate}
\item $p=2$ and $S$ is isomorphic to $\textrm{M}_{12},\textrm{M}_{22},\textrm{M}_{24},$ $\textrm{J}_{2}, \textrm{HS},\textrm{Suz},\textrm{Ru}$, $\textrm{Co}_1,\textrm{Co}_{3}$ or $\Alt_n$ for some integer $n\ge 7$; or

\item $p=3$ and $S$ is isomorphic to $\textrm{Suz},\textrm{Co}_{3}$ or $\Alt_n$ for some integer $n\ge 7$, 
\end{enumerate}

We make the following observation. Assume that $S$ has a rational-valued irreducible character $\theta\in\Irr(S)$ which is extendible to $\Aut(S)$. Then $\varphi=\theta\times\theta\times\cdots\times \theta\in\Irr(N)$ extends to $\chi\in\Irr(G)$ and $$1=|\chi(x)|=|\varphi(x)|=\prod_{i=1}^k|\theta(x_i)|.$$ Since $\theta$ is rational, $\theta(x_i)$ is a non-zero integer and thus $|\theta(x_i)|\ge 1$ for all $i$. The previous equation now  implies that $|\theta(x_i)|=1$ for all $i.$

(a) Assume first that $S$ is one of the sporadic simple groups in (i) but not in (ii). Then $x$ and hence $x_i's$ must be a $2$-element. Using \cite{GAP}, we can find an irreducible rational-valued character $\theta$ which is extendible to $\Aut(S)$ and does not take root of unity on any $2$-elements. So this case cannot occur.

Similarly, if $S$ appears in Case (ii), then $x$ and hence $x_i's$ are  $\{2,3\}$-elements. Again, by using \cite{GAP}, we we can find an irreducible rational-valued character $\theta$ which is extendible to $\Aut(S)$ and does not take root of unity on any $\{2,3\}$-elements. 

(b) Assume that $S\cong \Alt_n$, where $n\ge 7$ is an integer and that $\Alt_n$ has no block of $p$-defect zero for $p=2,3$ or both.

By the observation above, if $\lambda$ is a partition of $n$ which is not self-conjugate, then $\chi^{\lambda}$, the irreducible character of $\Sym_n$ labeled by $\lambda$, remains irreducible upon reduction to $\Alt_n$ and thus $|\chi^{\lambda}(x_i)|=1.$ Note that $\chi^{\lambda}$ is rational-valued. Now Lemma \ref{lem:alternating} provides a contradiction.

Therefore, $G$ must be solvable as wanted. The proof is now complete.
\end{proof}

\section{Finite metabelian groups with a root of unity element}
In this section, we will characterizing finite metabelian groups with  a root of unity element. Such a group with trivial center was classified by S. Ostrovskaya and E. M. Zhmud'. Recall that if $q>2$ is a prime power, then $\Gamma_q$ is a doubly transitive Frobenius group with a cyclic complement of order $q-1$ and degree $q$. Note that $\Gamma_q$ has a root of unity element and every root of unity element of $\Gamma_q$ lies in $\Fit(\Gamma_q)$, which is an elementary abelian $p$-group, where $q$ is a power of a prime $p$. Moreover, if $\Gamma=\Gamma_{q_1}\times \Gamma_{q_2}\times\cdots\times\Gamma_{q_m}$, where each $q_i>2$ are prime powers, then $\Gamma$ has a root of unity element and furthermore, all Sylow subgroups of $\Gamma$ are abelian.

\begin{lem}\label{lem4.1}
Let $G$ be a finite group and let $x\in G$ be a root of unity element. Let $K=\Center_\infty(G)$ be the hypercenter of $G$. Assume that $G$ is nonabelian. Then

\begin{enumerate}[$(1)$]
\item $G/K\cong \Gamma_{q_1}\times \Gamma_{q_2}\times \cdots\times \Gamma_{q_m}$, where each ${q_i}>2$ is a prime power and $m\ge 1$ is an integer. Moreover, $\Fit(G)=G'K$ is abelian, and $\Centralizer_{G/K}(xK)=\Fit(G/K)=\Fit(G)/K.$
\item $\Fit(G)=\Centralizer_G(x)$ and $k(G)=|\Fit(G)|=|\Centralizer_G(x)|$.

\item If $N=\bfZ_i(G)$ for some $i\ge 1$ or $N\leq \Center(G)$, then $\Centralizer_{G/N}(xN)=\Fit(G/N)=\Fit(G)/N$ and $k(G/N)=|\Fit(G):N|.$

\end{enumerate}
\end{lem}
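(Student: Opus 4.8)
The plan is to reduce the whole lemma to the trivial‑center classification (Lemma~\ref{lem:metabelian with trivial center}) applied to the quotient $G/K$, and then bootstrap to the centralizer and class‑number statements. To set up, note first that since $x$ is a root of unity element and $G$ is nonabelian, Theorem~A gives that $G$ is metabelian, that $\Fit(G)$ is abelian, that $x\in\Fit(G)$, and that $G'\leq\Fit(G)$. The hypercenter $K=\Center_\infty(G)$ is nilpotent and characteristic, so $K\leq\Fit(G)$, and by the very definition of the hypercenter one has $\Center(G/K)=1$ (if $K=\bfZ_c(G)$ with $\bfZ_c(G)=\bfZ_{c+1}(G)$, then $\Center(G/K)=\bfZ_{c+1}(G)/\bfZ_c(G)=1$).

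For part~(1) I would first check that $G/K$ is nonabelian. If $G$ were nilpotent then $\Fit(G)=G$ would force $G$ abelian (as $\Fit(G)$ is abelian), a contradiction; hence $G\neq K$, so $G/K\neq 1$, and a nontrivial group with trivial center cannot be abelian. Now $G/K$ is metabelian (a quotient of the metabelian $G$), has trivial center, and carries the root of unity element $xK$ by Lemma~\ref{lem:properties}(c). Thus Lemma~\ref{lem:metabelian with trivial center} yields $G/K\cong\Gamma_{q_1}\times\cdots\times\Gamma_{q_m}$ with each $q_i>2$ a prime power, together with $\Centralizer_{G/K}(xK)=\Fit(G/K)=(G/K)'=G'K/K$. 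Since $K=\bfZ_c(G)$, Lemma~\ref{lem:Fitting}(3) gives $\Fit(G/K)=\Fit(G)/K$; comparing this with $G'K/K$ produces $\Fit(G)=G'K$, which is abelian by Theorem~A. This settles part~(1).

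For part~(2), the inclusion $\Fit(G)\leq\Centralizer_G(x)$ is immediate because $\Fit(G)$ is abelian and contains $x$. For the reverse, if $g$ centralizes $x$ then $gK$ centralizes $xK$, so $\Centralizer_G(x)K/K\leq\Centralizer_{G/K}(xK)=\Fit(G)/K$; taking preimages and using $K\leq\Fit(G)$ gives $\Centralizer_G(x)\leq\Fit(G)$. Hence $\Centralizer_G(x)=\Fit(G)$, and then $k(G)=|\Centralizer_G(x)|=|\Fit(G)|$ by Lemma~\ref{lem:properties}(a). Finally, part~(3) follows by applying part~(2) to the quotient: for $N=\bfZ_i(G)$ or $N\leq\Center(G)$, Lemma~\ref{lem:Fitting}(2)--(3) gives $\Fit(G/N)=\Fit(G)/N$, and since $\Fit(G)$ is abelian while $G$ is not, we have $\Fit(G)\neq G$, so $\Fit(G/N)\neq G/N$ and $G/N$ is nonabelian. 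As $G/N$ has the root of unity element $xN$, parts~(1)--(2)---which hold for \emph{any} nonabelian finite group with a root of unity element---apply to $G/N$ and yield $\Centralizer_{G/N}(xN)=\Fit(G/N)=\Fit(G)/N$ and $k(G/N)=|\Fit(G/N)|=|\Fit(G):N|$.

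The only genuinely nontrivial input is the reduction in part~(1): verifying that $G/K$ satisfies the hypotheses of Lemma~\ref{lem:metabelian with trivial center}, namely that it is nonabelian with trivial center. Once that is secured, everything else is bookkeeping with the Fitting‑quotient identities of Lemma~\ref{lem:Fitting} and the fact that parts~(1)--(2) are statements valid for arbitrary nonabelian groups possessing a root of unity element, so they may be reused verbatim on the quotients appearing in part~(3).
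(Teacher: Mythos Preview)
Your proposal is correct and follows essentially the same approach as the paper: reduce to $G/K$, apply Lemma~\ref{lem:metabelian with trivial center}, then use Lemma~\ref{lem:Fitting} to pull the Fitting identifications back to $G$ and bootstrap the centralizer and class-number statements. You are slightly more explicit than the paper in verifying that $G/K$ (and later $G/N$) is nonabelian before invoking the trivial-center classification, but this is the only difference and it only adds clarity.
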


\begin{proof}  By Theorem A, $x\in F:=\Fit(G)$, $F$ is abelian and $G'\leq F.$ 
 Since $K\unlhd G$ is nilpotent, we have $\Center(G)\leq K\leq F.$ Now the center of $G/K$ is trivial by the definition of $K$. Moreover $\Fit(G/K)=F/K$ by Lemma \ref{lem:Fitting}\:(3). Since $G/K$ has a root of unity element $xK$, part (1) follows from Lemma  \ref{lem:metabelian with trivial center}. 

Since $x\in F$ and $F$ is abelian, we have $K\leq F\leq \Centralizer_G(x)$. Let $\overline{G}=G/K$.  From part (1), we have $\Centralizer_{\overline{G}}(\overline{x})=\Fit(\overline{G})=\overline{F}$. Hence \[\overline{F}\leq \overline{\Centralizer_G(x)}\leq \Centralizer_{\overline{G}}(\overline{x})=\overline{F}.\] Thus $\overline{F}=\overline{\Centralizer_G(x)}$ and hence $F=\Centralizer_G(x)$. As $k(G)=|\Centralizer_G(x)|$ by Lemma \ref{lem:properties}\:(a), part (2) follows.

Finally, let $N=\Center_i(G)$ for some $i\ge 1$ or $N\leq \Center(G).$ Then $G/N$ has a root of unity and it is not nilpotent. By Lemma \ref{lem:Fitting}, $\Fit(G/N)=F/N$. Now part (3) follows by applying part (2) to $G/N$.
  \end{proof}

Following P. Hall, a finite solvable group $G$ is called an $A$-group if every Sylow subgroup of $G$ is abelian. The next lemma shows that any finite group with a root of unity element is an $A$-group. 

\begin{prop}\label{prop:A-groups}
If a finite group $G$ has a root of unity element, then $G$ is an $A$-group.
\end{prop}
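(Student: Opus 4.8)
The plan is to argue by induction on $|G|$ and to reduce the entire statement to the single assertion that $\Center(G)\cap G'=1$. By Theorem~A the group $G$ is solvable, so it suffices to show that every Sylow subgroup of $G$ is abelian. If $G$ is abelian there is nothing to prove, so I assume $G$ is nonabelian and that the proposition holds for every group of smaller order possessing a root of unity element. I first dispose of the case $\Center(G)=1$: here the hypercentre $K=\Center_\infty(G)$ is also trivial, so Lemma~\ref{lem4.1}(1) gives $G=G/K\cong\Gamma_{q_1}\times\cdots\times\Gamma_{q_m}$, and such a product has abelian Sylow subgroups (as recorded in the paragraph preceding Lemma~\ref{lem4.1}); thus $G$ is an $A$-group.

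Now suppose $\Center(G)\neq 1$. The crucial step is to establish $\Center(G)\cap G'=1$, and for this I would exploit the numerical identities furnished by Lemma~\ref{lem4.1}: part~(2) gives $k(G)=|\Fit(G)|$, while part~(3) applied to $N=\bfZ_1(G)=\Center(G)$ gives $k(G/\Center(G))=|\Fit(G):\Center(G)|$. Dividing yields the exact relation $k(G)=|\Center(G)|\cdot k(G/\Center(G))$. I would then invoke (and prove) the general counting fact that for any central subgroup $N\le\Center(G)$ one has $k(G)\le |N|\cdot k(G/N)$, with equality if and only if $N\cap G'=1$. Applying this with $N=\Center(G)$ forces $\Center(G)\cap G'=1$.

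With this in hand the induction closes quickly. The quotient $G/\Center(G)$ carries the root of unity element $x\Center(G)$ by Lemma~\ref{lem:properties}(c), and it is nonabelian: were it abelian we would have $G'\le\Center(G)$ and hence $G'=\Center(G)\cap G'=1$, contradicting nonabelianness. Therefore the induction hypothesis makes $G/\Center(G)$ an $A$-group. Consequently, for any prime $p$ and any $P\in\Syl_p(G)$, the image $P\Center(G)/\Center(G)$ is an abelian Sylow $p$-subgroup of $G/\Center(G)$, whence $P'\le P\cap\Center(G)\le\Center(G)$. Since also $P'\le G'$, we conclude $P'\le\Center(G)\cap G'=1$, so $P$ is abelian, completing the induction.

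The main obstacle is the central-extension counting lemma $k(G)\le |N|k(G/N)$ together with its equality case, which carries the whole argument. I would prove it by partitioning $\Irr(G)=\bigsqcup_{\lambda\in\Irr(N)}\Irr(G\mid\lambda)$ according to the linear central character $\lambda$ that $N\le\Center(G)$ forces on each $\chi$; the block over $\lambda=1_N$ is exactly $\Irr(G/N)$, and standard projective-representation theory identifies $|\Irr(G\mid\lambda)|$ with the number of $\alpha_\lambda$-regular classes of $G/N$, which is at most $k(G/N)$, with equality precisely when $\lambda$ extends to a linear character of $G$ — equivalently, when $\lambda$ is trivial on $N\cap G'$. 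Summing over the $|N|$ values of $\lambda$ gives the inequality and pins down the equality case. The subtle point requiring care is that this argument only delivers $\Center(G)\cap G'=1$, not the stronger $\Center_\infty(G)\cap G'=1$ that would let one conclude in one stroke from $G/\Center_\infty(G)\cong\prod\Gamma_{q_i}$; it is exactly the induction on $|G|$ — peeling off $\Center(G)$ rather than the full hypercentre at once — that compensates, absorbing the remaining hypercentral layers one central extension at a time.
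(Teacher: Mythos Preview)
Your argument is correct and takes a genuinely different route from the paper's. The paper proceeds by induction via a \emph{minimal normal subgroup} $N$: for primes $r\neq p$ (where $N$ is a $p$-group) the Sylow $r$-subgroups embed in those of $G/N$, and for the prime $p$ one reduces to the case where $N$ is the unique minimal normal subgroup, whence $\Fit(G)$ is a $p$-group; then the structure $G/K\cong\prod_i\Gamma_{q_i}$ forces each $q_i$ to be a $p$-power, so $|G:\Fit(G)|$ is prime to $p$ and the abelian $\Fit(G)$ is itself the Sylow $p$-subgroup. You instead peel off the \emph{centre}: the numerical identity $k(G)=|\Center(G)|\cdot k(G/\Center(G))$ extracted from Lemma~\ref{lem4.1}(2),(3), combined with the standard central-extension count $k(G)\le |N|\,k(G/N)$ (equality iff $N\cap G'=1$), yields $\Center(G)\cap G'=1$ directly, and then $P'\le\Center(G)\cap G'=1$ finishes every Sylow at once. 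Your approach has the pleasant side effect of proving the relation $\Center(G)\cap G'=1$ from Corollary~\ref{cor1} without invoking Taunt's results on $A$-groups, whereas the paper deduces that corollary \emph{from} the $A$-group property; the paper's approach, on the other hand, avoids the projective-representation/$\alpha$-regular-class machinery you need for the equality case of the counting lemma. One minor remark: your aside that $G/\Center(G)$ is nonabelian is not actually needed for the induction (abelian groups are $A$-groups), though it does no harm.
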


\begin{proof}
Let $G$ be a finite group with a root of unity element $x\in G$. Clearly, if $G$ is abelian, then $G$ is an $A$-group. So, we can assume that $G$ is nonabelian and hence by Theorem A,  $x\in F:=\Fit(G)$ and both $F$ and $G/F$ are abelian so $G$ is solvable. We proceed by induction on $|G|$ that all Sylow subgroups of $G$ are abelian. 

Notice first that if $1<N\unlhd G$, then $G/N$ has a root of unity element $xN$ and so by induction, every Sylow subgroup of $G/N$ is abelian. Now let $N$ be a minimal normal subgroup of $G$. Since $G$ is solvable, $N$ is an elementary abelian $p$-group for some prime $p.$ If $Q$ is a Sylow $r$-subgroup of $G$ for some prime $r\neq p$, then $QN/N\cong Q/Q\cap N\cong Q$ is abelian. Thus it remains to show that every Sylow $p$-subgroup of $G$ is abelian. Let $P\in\Syl_p(G)$. Then $N\unlhd P$  and $P/N$ is abelian as $P/N\in\Syl_p(G/N)$. Hence $P'\leq N.$ Now if $G$ has another minimal normal subgroup, say $M\neq N$, then $P'\leq N\cap M=1$ and hence $P$ is abelian as wanted if $M$ is also a $p$-group. If instead $M$ is not a $p$-group then we can conclude as in the $r\not=p$ case that $P$ is abelian. Therefore, we may assume that $N$ is the unique minimal normal subgroup of $G$.

Since $F$ is abelian and $N\leq F$ is the unique minimal normal subgroup of $G$, $F$ must be a $p$-group and so $F=\OO_p(G)$. Let $K=\Center_\infty(G)$. By Lemma \ref{lem4.1}, $K\leq F\unlhd G$ and $G/K\cong \Gamma_{q_1}\times \Gamma_{q_2}\times\cdots\times\Gamma_{q_m}$ for some integer $m\ge 1$ and each $q_i>2$ is a prime power. Write $\Gamma_{q_i}=V_i\rtimes A_i$, where $V_i$ is the Frobenius kernel which is an elementary abelian group of order $q_i$ and $A_i$ is cyclic of order $q_i-1$. Now $F/K=\Fit(G/K)=V_1\times V_2\times\cdots\times V_m$ is a direct product of the Frobenius kernels of  the groups $\Gamma_{q_i}$. It follows that each $q_i$ is a power of $p$. Hence $p\nmid |A_i|$ for all $i$ and so $|G:F|=\prod_{i=1}^m|A_i|=\prod_{i=1}^m(q_i-1)$ is coprime to $p$. Therefore, $F$ is a Sylow $p$-subgroup of $G$ which is abelian and we are done.
\end{proof}

\begin{cor}\label{cor1} Let $G$ be a finite group and let $x\in G$ be a root of unity. Then $\Center(G)=\Center_\infty(G)$, that is, $G/\Center(G)$ has a trivial center, and $G'\cap \Center(G)=1$.
\end{cor}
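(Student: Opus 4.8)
The plan is to deduce both assertions from the fact, established in Proposition \ref{prop:A-groups}, that $G$ is an $A$-group; in particular every Sylow subgroup of $G$ is abelian. Neither the explicit structure of Lemma \ref{lem4.1} nor the nonabelianness of $G$ will be needed, and if $G$ is abelian both claims are trivial.

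I would prove $G'\cap\Center(G)=1$ first, by a transfer argument. It suffices to rule out a nontrivial $p$-element in $G'\cap\Center(G)$ for each prime $p$, since for $z\in G'\cap\Center(G)$ every primary component $z_p$ is a power of $z$ (the cyclic group $\langle z\rangle$ splits into its primary parts) and hence again lies in $G'\cap\Center(G)$, so $z=1$ once all the $z_p$ vanish. Fix $p$ and let $P\in\Syl_p(G)$; by Proposition \ref{prop:A-groups}, $P$ is abelian. Suppose $1\neq z\in G'\cap\Center(G)$ is a $p$-element, so $z\in P$. Consider the transfer homomorphism $V\colon G\to P$, which is well defined into the abelian group $P$. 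On the one hand $V$ factors through $G/G'$, so $V(z)=1$ because $z\in G'$. On the other hand $z$ is central, so the transfer evaluates as $V(z)=z^{[G:P]}$; since $p\nmid[G:P]$ and $z$ is a nontrivial $p$-element, $z^{[G:P]}\neq 1$. This contradiction gives $G'\cap\Center(G)=1$.

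Granting this, the equality $\Center(G)=\Center_\infty(G)$ is immediate and purely formal. It is equivalent to $\Center(G/\Center(G))=1$, i.e. to $\bfZ_2(G)=\Center(G)$. For $t\in\bfZ_2(G)$ and any $g\in G$ we have $[g,t]\in\Center(G)$ by the definition of the second center, while $[g,t]\in G'$ always; hence $[g,t]\in G'\cap\Center(G)=1$ for all $g$, so $t\in\Center(G)$. Thus $\bfZ_2(G)=\Center(G)$, the upper central series is stationary from the first term, and $\Center_\infty(G)=\Center(G)$.

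The only substantive input is the transfer evaluation $V(z)=z^{[G:P]}$ on central elements together with the vanishing of $V$ on $G'$; this is standard, but it is exactly where the abelianness of the Sylow subgroups (hence Proposition \ref{prop:A-groups}) is essential, since it is what allows us to transfer into $P$ itself and use $P'=1$. Everything else is formal manipulation of commutators and of the upper central series.
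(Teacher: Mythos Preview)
Your proof is correct, and it follows the same high-level route as the paper: both arguments invoke Proposition~\ref{prop:A-groups} to obtain that $G$ is an $A$-group, and both then appeal to the structural facts that in any $A$-group one has $G'\cap\Center(G)=1$ and $\Center_\infty(G)=\Center(G)$. The paper simply cites Taunt~\cite{Taunt} (specifically (3.8) and Theorem~4.1 there) for these two facts, whereas you have supplied direct proofs of them---the transfer argument for $G'\cap\Center(G)=1$ is in fact the classical one, and the deduction of $\bfZ_2(G)=\Center(G)$ from it is the standard commutator computation. So your version is not a different strategy but rather a self-contained unpacking of the citation, which has the advantage of making the corollary independent of \cite{Taunt}.
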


\begin{proof}
By Proposition \ref{prop:A-groups}, $G$ is a finite solvable $A$-group. The result now follows from $(3.8)$ and Theorem 4.1 in \cite{Taunt}.
\end{proof}


\begin{proof}[\textbf{Proof of Theorem B}]
Let $G$ be a finite group, let $Z:=\Center(G)$ and $F:=\Fit(G)$. Suppose first that $x\in G$ is a root of unity element. If $G$ is abelian, then we are done. Assume that $G$ is non-abelian. By Theorem A, $x\in F$, $F$ and $G/F$ are abelian and $G$ is metabelian. By Corollary \ref{cor1}, $\Center_\infty(G)=\Center(G)$, $G/Z$ has trivial center and $G'\cap \Center(G)=1$. Now,  the conclusion follows from Lemma \ref{lem4.1}\:(1). 

For the converse, assume that $G$ is nonabelian. So  $F=G'Z$ is abelian, $G'\cap Z=1$ and $G/Z\cong \prod_{i=1}^m\Gamma_{q_i}$ for some integer $m\ge 1$ and prime powers $q_i>2$. In particular, $G$ is a metabelian group. Write $\overline{G}=G/Z$ and use the `bar' notation. By Lemma \ref{lem:metabelian with trivial center}, $\overline{G}$ has a root of unity element $\overline{x}$ for some $x\in G$. Note that the hypothesis above implies that $F=G'\times Z.$


We claim that $x$ is also a root of unity element of $G$, that is, $|\chi(x)|=1$ for all $\chi\in\Irr(G)$. As $|\chi(x)|=|\chi(\overline{x})|=1$ for every $\chi\in\Irr(G/Z)$, it suffices to show that if $1\neq \lambda\in\Irr(Z)$ and $\chi\in\Irr(G)$ lying over $\lambda$, then $|\chi(x)|=1.$

Let $1\neq \lambda\in\Irr(Z)$. Since $F=Z\times G'$, $\theta=\lambda\times 1_{G'}\in\Irr(F)$ is an extension of $\lambda$ and $G'\leq \kernel(\theta)$. So $\theta$  can be considered an irreducible character of $F/G'$ and thus $\theta$ extends to $\phi\in\Irr(G/G')$. Thus $\lambda$ extends to $\phi\in\Irr(G)$. By Gallagher's theorem, every $\chi\in\Irr(G)$ lying above $\lambda$ has the form $\phi\mu$ for some irreducible character $\mu\in\Irr(\overline{G})$. Since $\phi$ is linear, we have $|\phi(x)|=1$.  We also have $|\mu(x)|=|\mu(\overline{x})|=1$ as $\mu\in\Irr(\overline{G})$ and $\overline{x}$ is a root of unity in $\overline{G}$. Therefore \[|\chi(x)|=|\phi(x)\mu(x)|=|\phi(x)|\cdot|\mu(x)|=1,\] hence $x$ is a root of unity element of $G$.
\end{proof}

\section*{Acknowledgment} Part of this work was done while the third author was visiting the Vietnam Institute for Advanced Study in Mathematics (VIASM), and he thanks the VIASM for financial
support and hospitality.

\end{document}